\def\qu#1 {\fbox {\footnote {\ }}\ \footnotetext { From Qu: {\color{red}#1}}}
\def\hqu#1 {}
\def\kq#1 {\fbox {\footnote {\ }}\ \footnotetext { From KangQuan: {\color{blue}#1}}}
\def\hkq#1 {}
\newtheorem{Th}{Theorem}[section]
\newtheorem{Prop}[Th]{Proposition}
\newtheorem{Lemma}[Th]{Lemma}
\newtheorem{Def}[Th]{Definition}
\newtheorem{example}{Example}
\newtheorem{Rem}[Th]{Remark}
\newcommand{\tr}{{\rm Tr}}
\newcommand{\gf}{{\mathbb F}}
\newcommand{\PG}{{\rm PG}}
\def\tr{\mathrm{Tr}}
\newcommand{\figcaption}{\def\@captype{figure}\caption}
\newcommand{\tabcaption}{\def\@captype{table}\caption}
\begin{document}
	\title{Constructing rotatable permutations of $\mathbb{F}_{2^m}^3$ with $3$-homogeneous functions}
	\author{ {Yunwen Chi,  Kangquan Li, Longjiang Qu}
	\thanks{Yunwen Chi,  Kangquan Li, Longjiang Qu are with College of Science,
		National University of Defense Technology, Changsha, 410073, China.
		E-mail: chiyunwen@hotmail.com,  likangquan11@nudt.edu.cn, ljqu\_happy@hotmail.com.
		This work is supported by the National Key Research and Development Program of China under Grant No. 2022YFA1004900, the National Natural Science Foundation of China (NSFC) under Grant 62202476, 62032009 and 62172427,  and the Research Fund of National University of Defense Technology under Grant ZK22-14.
		{\emph{(Corresponding author: Longjiang Qu)}
		}}}
	\maketitle{}

	\begin{abstract}
    In the literature, there are many results about permutation polynomials over finite fields. However, very few permutations of vector spaces are constructed although it has been shown that permutations of vector spaces have many applications in cryptography, especially in constructing permutations with low differential and boomerang uniformities.

    In this paper, motivated by the butterfly structure \cite{perrin2016cryptanalysis} and the work of Qu and Li \cite{qu2023}, we investigate rotatable permutations from $\gf_{2^m}^3$ to itself with $d$-homogenous functions.
    Based on the theory of equations of low degree, the resultant of polynomials,  and some skills of exponential sums, we construct five infinite classes of $3$-homogeneous rotatable permutations from $\gf_{2^m}^3$ to itself, where $m$ is odd. Moreover, we demonstrate that the corresponding permutation polynomials of $\gf_{2^{3m}}$ of our newly constructed permutations of $\gf_{2^m}^3$ are QM-inequivalent to the known ones.

	\end{abstract}

	\begin{IEEEkeywords}
		Rotatable permutation, vector space, homogeneous function
	\end{IEEEkeywords}

\section{Introduction}
\label{ch1}

Let $\gf_q$ be the field with $q$ elements. A polynomial $F\in \gf_{q}[x]$ is a permutation polynomial (PP for short) of $\gf_q$ if its associate polynomial function $F:\ c \mapsto F(c) $ from $\gf_q$ to itself is a permutation of $\gf_q$.
PPs of finite fields are an important part of finite field theory and have a wide range of applications in cryptography \cite{qu2013constructing,li2021permutation,li2021cryptographically,kim2022permutation}, coding theory \cite{laigle2007permutation,ding2013cyclic},  combinatorial designs \cite{ding2006family}, and many other fields. For example, the key nonlinear component of the AES (Advanced Encryption
Standard) algorithm uses the inverse function, which is a PP over the finite field with 256 elements.
Interested readers can refer to \cite{hou2015permutation,li2019survey,wang2019polynomials} for more comprehensive and recent results about PPs.

Let $n$ be a positive integer and $\gf_q^n$ be the vector space of $\gf_q$ with dimension $n$. We often identify the vector space $\gf_q^n$ with the finite field $\gf_{q^n}$. A function $F$ from $\gf_q^n$ to itself is called a permutation of $\gf_q^n$ if, for all $a\in\gf_q^n$, the equation $F(x)=a$ has exactly one solution in $\gf_q^n$. It is well known that a permutation of $\gf_q^n$ can be uniquely transformed to a permutation polynomial of $\gf_{q^n}$ with a degree less than $q^n-1$, and vice versa. Compared to the results for permutation polynomials, there are not many constructions for permutations of the vector space $\gf_q^n$. But in fact, permutations of $\gf_q^n$ have also applications in many areas. For example, in 2016, regarding the only current example of APN(Almost Perfect Nonlinear) permutation \cite{browning2010apn}, Perrin et al. \cite{perrin2016cryptanalysis} used a reverse engineering method to propose a simple form of the unique APN permutation and generalized it to the butterfly structures, including open and closed ones, which are essentially functions from $\gf_q^2$ to itself.  The closed butterfly structure is a quadratic non-bijective function of $\gf_q^2$ defined by
\begin{equation}
    \label{work1}
    (R(x,y), R(y,x)),
\end{equation}
where $R(x,y)$ is a function from $\gf_q^2$ to $\gf_q$,
and the open one is a permutation, which is CCZ-equivalent \cite{carlet1998codes} to the closed one. Later, Canteaut et al. \cite{canteaut2017generalisation} proposed the generalized butterfly structure and obtained many new permutations with the best-known differential uniformity and nonlinearity by choosing $R(x,y)=(x+\alpha y)^3+\beta y^3$, where $\alpha,\beta\in\gf_q$ with $q=2^m$.  In 2021,  Li et al. \cite{li2021cryptographically} and Li et al. \cite{li20204} obtained a class of permutations on $\gf_{2^m}^2$ with boomerang uniformity 4 (a  newly proposed cryptographic notion for evaluating the subtleties of
boomerang-style attacks,  see \cite{boura2018boomerang,cid2018boomerang,li2019new}) from the open butterfly structure by choosing
\begin{equation}
    \label{work2}
    R(x,y)=(x+\alpha y)^{2^i+1}+\beta y^{2^i+1},
\end{equation}
 where $\alpha,\beta \in \gf_{2^m}$, $\gcd(i,m)=1$. In 2022, Beierle et al. \cite{beierle2022further} gave simple representations of two APN permutation instances on $\gf_{2^9}$ found in \cite{beierle2021new}, namely
\begin{equation}
    \label{work3}
        F(x,y,z)=(x^3+uy^2z,y^3+uz^2x,z^3+ux^2y),
\end{equation}
where $u\in \gf_{2^3}$ is the root of $x^3+x+1$ or $x^3+x^2+1$. Later in 2023, Li and Nikolay \cite{li2022two} obtained the following two infinite classes of APN permutations from $\gf_{2^m}^3$ to itself 
$$
F_1(x,y,z)=(x^3+x^2z+yz^2,x^2z+y^3,xy^2+y^2z+z^3)$$
and
$$F_2(x,y,z)=(x^3+xy^2+yz^2,xy^2+z^3,x^2z+y^3+y^2z),$$ 
    where $m$ is odd. Remarkably, the above families $F_1$ and $F_2$ cover the APN permutation instances \eqref{work3}. From the above results, we can find that constructing permutations of the vector space $\gf_q^n$ may be a new and useful approach to obtaining permutations with good cryptographic properties.

    In addition to directly corresponding to permutation polynomials over finite fields, which is well known, permutations on the vector space $\gf_q^n$ can also be used to construct permutation polynomials of the form $x^rh(x^{q-1})$ over the finite field $\gf_{q^n}$.
    Very recently, inspired by the results of permutation polynomials of the form $x^rh(x^{q-1})$ over $\gf_{q^2}$,
    Qu and Li \cite{qu2023} proposed a bijection from the multiplicative subgroup $\mu_{q^2+q+1}$ of $\gf_{q^3}$ with order $q^2+q+1$ and the projective plane $\mathrm{PG}(2,q)$. They proposed a method to construct permutation polynomials of form $f(x)=x^rh(x^{q-1})$ of $\gf_{q^3}$ via  bijections of $\PG(2,q)$, mainly including permutations of $\gf_q^3$ with $d$-homogeneous monomials and $2$-homogeneous functions.

It is not difficult to find that all the existing results of permutations of vector spaces have one thing in common, that is, they are all $d$-\textit{homogeneous}. For a given positive integer $d$, a polynomial $f(x_1,x_2,\ldots,x_n)\in\gf_q[x_1,x_2,\ldots,x_n]$ is called $d$-homogeneous if for all $\lambda\in\gf_q$, $$f(\lambda x_1, \lambda x_2,\ldots, \lambda x_n) = \lambda^d f(x_1,x_2,\ldots,x_n). $$ The well-known $d$-homogeneous functions have many applications in sequence theory \cite{klapper1995d}, difference sets \cite{kim2005new,no2004new}, etc. For example, permutations with boomerang uniformity $4$ obtained by  Li et al. \cite{li2021cryptographically} and Li et al. \cite{li20204} are $(2^i+1)$-homogeneous; APN permutations obtained by Li and Nikolay \cite{li2022two}  are $3$-homogeneous.  In addition, the closed butterfly structure \eqref{work1} and the two APN permutation instances \eqref{work3} are also rotatable. For a function $F = (f_1,f_2,\ldots,f_n)$ from $\gf_q^n$ to itself, we say that $F$ is \textit{rotatable} if for all $1\le i\le n$, $f_i(x_1,x_2,\ldots,x_n)=f_1(\sigma_{i-1}(x_1,x_2,\ldots,x_n))$, where $$\sigma_i= \underbrace{\sigma_1\circ\sigma_1 \circ \cdots \circ\sigma_1}_{i~\text{times}},$$ $\sigma_0$ is the  identity mapping and  $\sigma_1(x_1,x_2,\ldots,x_n)=(x_2,x_3,\ldots,x_n,x_1)$.

Motivated by the above observations, in this paper, we study rotatable permutations from  $\gf_q^3$ to itself with $3$-homogeneous functions. Specifically, we consider permutations on $\gf_q^3$ with $q=2^m$( $m$ is odd) of the form
\begin{equation}
\label{Fxyz}
   F(x,y,z)=(f(x,y,z),f(y,z,x),f(z,x,y)),
\end{equation}
where
$$
f(x,y,z)= x^3+a_1y^3+a_2z^3+a_3x^2y+a_4xy^2+a_5x^2z+a_6xz^2+a_7yz^2+a_8y^2z
$$
and $a_i\in\gf_2$ for $i\in \{1,2,...,8\}$. After investigating some basic properties of $d$-homogeneous permutations.
we totally construct five infinite classes of rotatable permutations from $\gf_{q}^3$ to itself where $q=2^m$, $m$ is odd. The main difficulty in the proofs lies in solving multivariate systems of equations. Two different methods are used to prove the results above, which share a common idea: elimination. The first method takes advantage of the special structure of rotation. The second method employs a combination of resultant and exponential sums to transform the equation system into a single one, and further analyses its solution.  Our permutations of $\gf_q^3$ have their polynomial representations of the finite field $\gf_{q^3}$ due to the isomorphism between $\gf_{q^3}$ and $\gf
_{q}^3$. Recently, many researchers also studied permutation trinomials of $\gf_{q^3},$ see \cite{xie2023two,bartoli2020permutation,gupta2022new,DBLP:journals/ffa/ZhaHZ19,wang2018six,bartoli2023permutation}. Since the corresponding permutation polynomials of $\gf_{q^3}$ of our newly constructed permutations on $\gf_q^3$ have much more than three terms, our results are obviously QM-inequivalent to the known ones.

The rest of the paper is organized as follows. In Section \ref{ch2}, we systematically introduce the concepts used in this paper and the lemmas that contribute to the subsequent proofs.
Five infinite classes of $3$-homogeneous rotatable permutations are presented in Section \ref{ch3}. Section \ref{ch4} concludes the work of this paper and provides some further work.

\section{Preliminaries}
\label{ch2}
We first unify the notation in this paper.

\begin{itemize}
    \item $\gf_{q}$: the finite field with $q$ elements.
    \item $\gf_{q}^n$: $n$-dimension vector space of $\gf_{q}$.
    \item $\tr_{q^n/q}(\alpha)$: the trace function of $\alpha$ from $\gf_{q^n}$ to $\gf_{q}$, i.e.,
    $$
    \tr_{q^n/q}(\alpha)=\alpha+\alpha^{q}+\cdots+\alpha^{q^{n-1}}.
    $$
    In particular, we denote it as $\tr_n(\alpha)$ when $q = 2$.
\end{itemize}

Lemma  \ref{lemma-1} shows some properties of the trace function.
\begin{Lemma}
\cite{lidl1997finite,hou2015permutation,li2023further}
\label{lemma-1}
    Let $K=\gf_q$ and $F=\gf_{q^m}$. Then the trace function $\tr_{F/K}$ satisfies the following properties:

    (i) $\tr_{F/K}(\alpha+\beta)=\tr_{F/K}(\alpha)+\tr_{F/K}(\beta)$ for all $\alpha, \beta \in F$;

    (ii) $\tr_{F/K}(c\alpha)=c \tr_{F/K}(\alpha)$ for all $c\in K$, $\alpha\in F$;

    (iii) $\tr_{F/K}(\alpha^q)= \tr_{F/K}(\alpha)$ for all $\alpha\in F$;

    (iv) $\sum\limits_{\omega\in \mathbb{F}_{2^m}}(-1)^{\mathrm{Tr}_m(\alpha\omega)}=\left\{
            \begin{aligned}
                &\ 0,\ \alpha\neq 0,\\
                &\ 2^m,\ \alpha=0.
            \end{aligned}
            \right.$
\end{Lemma}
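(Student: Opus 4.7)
The plan is to prove each of the four properties by working directly from the definition $\tr_{F/K}(\alpha)=\sum_{i=0}^{m-1}\alpha^{q^i}$. Items (i)--(iii) reduce to standard manipulations using the Frobenius map, while (iv) is a character-orthogonality argument.

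For (i), I would use the fact that in characteristic $p$ (where $q=p^k$) the Frobenius $x\mapsto x^q$ is additive, so $(\alpha+\beta)^{q^i}=\alpha^{q^i}+\beta^{q^i}$ for every $i\ge 0$; summing over $i=0,\ldots,m-1$ gives $\tr_{F/K}(\alpha+\beta)=\tr_{F/K}(\alpha)+\tr_{F/K}(\beta)$. For (ii), since $c\in K=\gf_q$ we have $c^q=c$ and hence $c^{q^i}=c$ for all $i$, so $(c\alpha)^{q^i}=c\alpha^{q^i}$, and summing yields $\tr_{F/K}(c\alpha)=c\tr_{F/K}(\alpha)$. For (iii), a direct reindexing gives $\tr_{F/K}(\alpha^q)=\sum_{i=0}^{m-1}\alpha^{q^{i+1}}=\alpha^q+\alpha^{q^2}+\cdots+\alpha^{q^m}$, and since $\alpha\in\gf_{q^m}$ satisfies $\alpha^{q^m}=\alpha$, this sum coincides with $\tr_{F/K}(\alpha)$.

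For (iv), the case $\alpha=0$ is immediate: every summand equals $(-1)^0=1$, producing $2^m$. For $\alpha\ne 0$, consider the map $L:\omega\mapsto \tr_m(\alpha\omega)$ from $\gf_{2^m}$ to $\gf_2$, which is $\gf_2$-linear by (i) and (ii). The key point is that $L$ is surjective, since $\tr_m$ itself is a nonzero polynomial of degree $2^{m-1}<2^m$ (so it does not vanish on $\gf_{2^m}$), and multiplication by $\alpha\ne 0$ is a bijection of $\gf_{2^m}$; thus $|L^{-1}(0)|=|L^{-1}(1)|=2^{m-1}$, and the exponential sum evaluates to $2^{m-1}-2^{m-1}=0$.

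The computations in (i)--(iii) are entirely formal; the only place where a genuine observation is needed is the surjectivity of $\tr_m$ in (iv), which is the main (and only) obstacle and is handled by the degree bound above. Since the lemma is a classical result from \cite{lidl1997finite,hou2015permutation,li2023further}, I would keep the written proof extremely short, essentially restating the four verifications outlined here.
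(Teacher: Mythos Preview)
Your proof is correct and entirely standard. Note that the paper does not actually prove this lemma; it is stated with citations to \cite{lidl1997finite,hou2015permutation,li2023further} and used without proof, so there is no ``paper's own proof'' to compare against---your write-up simply supplies the classical verification that the paper omits.
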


Lemmas \ref{lemma-4} and \ref{lemma-3} characterize the number of solutions of quadratic and cubic equations respectively.

\begin{Lemma}
    \cite{lidl1997finite,williams1975note}
    \label{lemma-4}
        Let $a,b\in \mathbb{F}_{2^n}$ and $a\neq 0$. Then the quadratic equation $x^2+ax+b=0$ has no solution in $\mathbb{F}_{2^n}$ if and only if $\mathrm{Tr}_n(a^{-2}b)\neq 0$.
    \end{Lemma}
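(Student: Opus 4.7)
The plan is to reduce $x^2+ax+b=0$ to the additive Artin-Schreier form $y^2+y=c$ via a linear substitution and then invoke the standard trace characterization of solvability. Since $a\neq 0$, I would first substitute $x=ay$, which transforms the equation into $a^2(y^2+y)+b=0$, or equivalently $y^2+y=a^{-2}b$. Writing $c=a^{-2}b$, the original equation is solvable in $\gf_{2^n}$ if and only if $y^2+y=c$ is solvable in $\gf_{2^n}$.

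Next, I would establish that $y^2+y=c$ has a solution in $\gf_{2^n}$ if and only if $\tr_n(c)=0$. The forward direction is immediate: if $y_0$ satisfies $y_0^2+y_0=c$, then by Lemma \ref{lemma-1}(i) and (iii), $\tr_n(c)=\tr_n(y_0^2)+\tr_n(y_0)=\tr_n(y_0)+\tr_n(y_0)=0$. For the converse, consider the $\gf_2$-linear map $L:\gf_{2^n}\to\gf_{2^n}$ defined by $L(y)=y^2+y$. Its kernel is exactly $\{0,1\}$, so $|\im(L)|=2^{n-1}$. By the forward direction, $\im(L)\subseteq \{z\in\gf_{2^n}:\tr_n(z)=0\}$; and since $\tr_n:\gf_{2^n}\to\gf_2$ is a nonzero $\gf_2$-linear form, the latter subspace also has cardinality $2^{n-1}$. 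Hence the two subspaces coincide, so every $c$ with $\tr_n(c)=0$ lies in $\im(L)$.

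Combining the reduction with the normal-form characterization and taking the contrapositive yields: $x^2+ax+b=0$ has no solution in $\gf_{2^n}$ precisely when $\tr_n(a^{-2}b)\neq 0$. The only genuinely non-formal step is the dimension count that promotes the inclusion $\im(L)\subseteq\ker\tr_n$ to an equality; the rest of the argument is a mechanical change of variables combined with additivity and Frobenius-invariance of the trace already recorded in Lemma \ref{lemma-1}. I anticipate no real obstacle, since this is a classical result and each ingredient is either algebraic manipulation or a direct application of the lemmas already available.
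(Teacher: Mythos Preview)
Your argument is correct and is the standard proof: reduce to the Artin--Schreier equation $y^2+y=c$ by the substitution $x=ay$, and then use the trace criterion together with a dimension count on $\im(L)$ versus $\ker\tr_n$. There is nothing to fix.

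As for comparison with the paper: the paper does not supply a proof of this lemma at all. It is stated as a cited result from \cite{lidl1997finite,williams1975note} and used as a black box in later arguments. So there is no ``paper's own proof'' to compare against; your write-up simply fills in the classical justification that the authors chose to omit.
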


 \begin{Lemma}
    \cite{berlekamp1967solution}
    \label{lemma-3}
        Let $a,b\in \mathbb{F}_{2^n}$ and $b\neq 0$. Then the cubic equation $x^3+ax+b=0$ has a unique solution in $\mathbb{F}_{2^n}$ if and only if $\mathrm{Tr}_n\left(\frac{a^3}{b^2}+1\right)\neq 0$.
    \end{Lemma}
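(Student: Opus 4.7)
The plan is to reduce the cubic to a quadratic by factoring out a linear factor and then applying Lemma~\ref{lemma-4}. First I would check separability: in characteristic $2$, $f(x) := x^3 + ax + b$ has derivative $f'(x) = x^2 + a$, so any common root of $f$ and $f'$ would force $x^2 = a$ and $f(x) = ax + ax + b = b$, contradicting $b \neq 0$. Hence $f$ has exactly $0$, $1$, or $3$ distinct roots in $\mathbb{F}_{2^n}$.

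The main step handles the case when at least one root $x_0 \in \mathbb{F}_{2^n}$ exists (necessarily $x_0 \neq 0$). Polynomial division gives $f(x) = (x + x_0)(x^2 + x_0 x + (a + x_0^2))$, so $f$ has a unique root in $\mathbb{F}_{2^n}$ iff the quadratic factor has none. Lemma~\ref{lemma-4} translates this into $\mathrm{Tr}_n((a+x_0^2)/x_0^2) = \mathrm{Tr}_n(1) + \mathrm{Tr}_n(a/x_0^2) \neq 0$. To eliminate the dependence on the chosen $x_0$, I would use $b = x_0(x_0^2 + a)$ to derive
\[
\frac{a}{x_0^2} + \frac{a^3}{b^2} \;=\; \frac{a\, x_0^4}{b^2} \;=\; \left(\frac{ax_0}{b}\right)^{2} + \frac{ax_0}{b},
\]
an element of trace-zero form, so that $\mathrm{Tr}_n(a/x_0^2) = \mathrm{Tr}_n(a^3/b^2)$. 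Thus the ``unique root'' criterion becomes $\mathrm{Tr}_n(a^3/b^2 + 1) \neq 0$, exactly as stated.

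To complete the ``iff'' I still need to show: if $f$ has no root in $\mathbb{F}_{2^n}$, then $\mathrm{Tr}_n(a^3/b^2 + 1) = 0$. For $a \neq 0$ I would use the substitution $x = y + a/y$, which turns $f(x) = 0$ into $y^6 + by^3 + a^3 = 0$, i.e., the quadratic resolvent $t^2 + bt + a^3 = 0$ in $t = y^3$; by Lemma~\ref{lemma-4} its solvability in $\mathbb{F}_{2^n}$ is governed by $\mathrm{Tr}_n(a^3/b^2)$, and tracking whether cubing on $\mathbb{F}_{2^n}$ (bijective for odd $n$, three-to-one onto its image otherwise) lifts a resolvent root to an actual root of $f$ will seal the claim. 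The case $a = 0$ is immediate since $x^3 + b = 0$ has a unique solution when $n$ is odd. The hardest part will be this final step: carefully reconciling the resolvent's solvability and the cubing-map geometry with the three possibilities $0, 1, 3$ roots across both parities of $n$; the algebraic identity above is the single cleanest ingredient that does the work in the ``root exists'' case.
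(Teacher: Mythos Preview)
The paper does not prove this lemma at all: it is quoted from \cite{berlekamp1967solution} with no argument given. So there is no ``paper's own proof'' to compare against, and I will simply assess your plan on its merits.

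Your separability check is fine, and the ``root exists'' half is clean and correct: the identity
\[
\frac{a}{x_0^2}+\frac{a^3}{b^2}=\left(\frac{ax_0}{b}\right)^{2}+\frac{ax_0}{b}
\]
is exactly what one needs to make the criterion from Lemma~\ref{lemma-4} root-independent. That direction is complete.

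For the converse you only need the implication ``$\mathrm{Tr}_n(a^3/b^2+1)\neq 0 \Rightarrow f$ has a root'', not the full $0/1/3$ classification. Your resolvent substitution $x=y+a/y$ is the right tool, but two points deserve care. First, your treatment of $a=0$ only covers odd $n$; for even $n$ you must also note that $\mathrm{Tr}_n(1)=0$, so the trace condition automatically fails and there is nothing to prove. Second, and this is the genuine work, when $a\neq 0$ and $n$ is even the trace hypothesis forces $\mathrm{Tr}_n(a^3/b^2)\neq 0$, so the resolvent $t^2+bt+a^3$ has its two roots $t,\,t^{2^n}$ in $\mathbb{F}_{2^{2n}}\setminus\mathbb{F}_{2^n}$, and you must still descend to an $x\in\mathbb{F}_{2^n}$. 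The way through is to observe that $t\cdot t^{2^n}=a^3$, hence any cube root $y$ of $t$ satisfies $(y^{2^n+1})^3=a^3$; since the primitive cube roots of unity lie in $\mathbb{F}_{2^n}$ (as $n$ is even), the three cube roots of $t$ produce the three values $a,\omega a,\omega^2 a$ for $y^{2^n+1}$, so one choice gives $y^{2^n+1}=a$ and then $x=y+a/y$ is fixed by the $2^n$-th power Frobenius, i.e.\ lies in $\mathbb{F}_{2^n}$. With that observation your outline becomes a complete proof.
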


Further, we will define the resultant of two polynomials, which is useful in our proof.

\begin{Def}
    \cite{lidl1997finite}
    \label{def-1}
    Let ${K}$ be a field, $f(x)=a_0x^n+a_1x^{n-1}+\cdots+a_n \in {K}[x]$ and $g(x)=b_0 x^m+b_1 x^{m-1}+ \cdots +b_m \in {K}[x]$ be two polynomials of formal degree $n$ resp. $m$ with $n,m \in \mathbb{N}$. Then the resultant $R(f,g)$ of the two polynomials is defined by the determinant
    $$
    R(f,g)=\left |
    \begin{matrix}
        a_0 & a_1 & \cdots & a_n & 0   &   & \cdots & 0\\
        0   & a_0 & a_1 & \cdots & a_n & 0 & \cdots & 0\\
        \vdots&   &     &     &     &   &     & \vdots \\
        0   & \cdots & 0   & a_0 & a_1 &   & \cdots & a_n \\
        b_0 & b_1 & \cdots &     & b_m & 0 & \cdots & 0 \\
        0   & b_0 & b_1    & \cdots&   & b_m& \cdots& 0 \\
        \vdots&   &     &     &     &   &     & \vdots \\
        0& \cdots & 0   & b_0 & b_1 &   & \cdots & b_m \\
    \end{matrix}
    \right|
    $$
    of order $m+n$.
\end{Def}

For a field $K$ and two polynomials $F(x,y), G(x,y) \in K[x,y]$, we use $ R_y(F,G)$ to denote the resultant of $F$ and $G$ with respect to $y$. It is the resultant of $F$ and $G$ when considered as polynomials in the single variable $y$. In this case, $ R_y(F,G)\in K[x]$ belongs to the ideal generated by $F$ and $G$, and thus any $a,b$ satisfying $F(a,b)=0$ and $G(a,b)=0$ is such that $R_y(F,G)(a)=0$ (see \cite{lidl1997finite}).


While proving the permutability of a function from $\mathbb{F}_q^n$ to $\mathbb{F}_q^n$, Lemma \ref{lemma-2} can be a powerful tool to help simplify the structure of the function.

 \begin{Lemma}
    \label{lemma-2}
        Let $\phi$,$\psi$ be permutations from $\mathbb{F}_q^n$ to $\mathbb{F}_q^n$, and $F$ is a function from $\mathbb{F}_q^n$ to $\mathbb{F}_q^n$. Then $\psi \circ F \circ \phi$ is a permutation if and only if $F$ is a permutation.
\end{Lemma}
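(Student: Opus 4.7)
The plan is to observe that this is a standard fact: permutations of the finite set $\mathbb{F}_q^n$ form a group under composition (namely, the symmetric group on $q^n$ elements), and bijections are closed under composition and taking inverses. Both directions follow from these facts essentially for free, so the proof reduces to recording the two implications cleanly.

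For the forward direction, I would assume $F$ is a permutation and show $\psi \circ F \circ \phi$ is injective, which on the finite set $\mathbb{F}_q^n$ is equivalent to being a permutation. Suppose $(\psi \circ F \circ \phi)(x) = (\psi \circ F \circ \phi)(y)$. Applying $\psi^{-1}$ (which exists since $\psi$ is a permutation) yields $F(\phi(x)) = F(\phi(y))$; by injectivity of $F$ then $\phi$, we conclude $x = y$. Hence $\psi \circ F \circ \phi$ is injective, and therefore a permutation of $\mathbb{F}_q^n$.

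For the converse, I would suppose $\psi \circ F \circ \phi$ is a permutation. Since $\phi$ and $\psi$ are permutations, their inverses $\phi^{-1}, \psi^{-1}$ exist and are also permutations. Then
\begin{equation*}
F = \psi^{-1} \circ (\psi \circ F \circ \phi) \circ \phi^{-1}
\end{equation*}
expresses $F$ as a composition of three permutations of $\mathbb{F}_q^n$, and by the forward direction (applied with $\psi^{-1}, \phi^{-1}$ in place of $\psi, \phi$, and $\psi \circ F \circ \phi$ in place of $F$), $F$ is itself a permutation.

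There is no genuine obstacle here; the only thing to be careful about is invoking the existence of $\phi^{-1}$ and $\psi^{-1}$, which is immediate because $\phi$ and $\psi$ are bijections of the finite set $\mathbb{F}_q^n$. The finiteness is not strictly necessary, but it lets us use the equivalence of injectivity, surjectivity, and bijectivity interchangeably in the argument.
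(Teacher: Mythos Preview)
Your proof is correct; this is indeed just the closure of the symmetric group on $\mathbb{F}_q^n$ under composition and inverses, and both implications are handled cleanly. The paper itself states this lemma without proof, treating it as a standard fact, so there is no approach to compare against.
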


Let $F$ be a function from $\gf_q^3$ to itself. If $F$ is a permutation, the following lemma gives two sufficient and necessary conditions respectively.

\begin{Lemma}
\cite{lidl1997finite}
    \label{lemma-5}
    Let $F(x,y,z)$ be a function from $\gf_q^3$ to itself. Then $F$ is a permutation if and only if any of the following conditions hold:

    (i) the equation $F(x,y,z)=(a,b,c)$ has only one solution for all $a,b,c \in \gf_q$;

    (ii) the equation $F(x,y,z)=F(x+a,y+b,z+c)$ has no solution for all $(a,b,c)\in \gf_q^3\backslash (0,0,0)$.
\end{Lemma}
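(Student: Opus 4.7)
The plan is to verify that each of conditions (i) and (ii) is equivalent to $F$ being bijective on the finite set $\gf_q^3$, using the standard observation that on a finite set, injectivity, surjectivity, and bijectivity coincide.

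First I would dispatch (i) by unpacking definitions. Saying that $F(x,y,z)=(a,b,c)$ has exactly one solution for every $(a,b,c)\in\gf_q^3$ is literally the statement that the fibre $F^{-1}(a,b,c)$ is a singleton for every point in the codomain, which is the definition of $F$ being a bijection from $\gf_q^3$ to itself, i.e.\ a permutation. No further work is required beyond this direct translation.

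Next I would treat (ii). Observe that the condition ``$F(x,y,z)=F(x+a,y+b,z+c)$ has no solution for all $(a,b,c)\in\gf_q^3\setminus\{(0,0,0)\}$'' is just a rephrasing of injectivity: if $F(u)=F(v)$ with $u\neq v$, then setting $(x,y,z)=u$ and $(a,b,c)=v-u\neq(0,0,0)$ produces a solution, and conversely any solution of the displaced equation provides a pair of distinct points with the same image. Hence (ii) is equivalent to $F$ being injective. Since $\gf_q^3$ is finite, an injective self-map is automatically surjective, so injectivity is equivalent to $F$ being a permutation.

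There is no real obstacle here: the lemma is essentially a bookkeeping statement that records the two working criteria used repeatedly in the sequel. I would simply present the above two equivalences and conclude that (i) and (ii) are each necessary and sufficient for $F$ to be a permutation of $\gf_q^3$.
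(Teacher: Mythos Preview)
Your argument is correct and complete: part (i) is the definition of bijectivity, and part (ii) is a restatement of injectivity, which on a finite set coincides with bijectivity. The paper itself does not supply a proof of this lemma---it is simply cited from \cite{lidl1997finite}---so there is nothing to compare against beyond noting that your reasoning is the standard one.
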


In this paper, we consider permutations of the form
$$F(x,y,z) = (f(x,y,z),f(y,z,x),f(z,x,y)),$$
where $f(x,y,z)$ is $3$-homogeneous. For the $d$-homogeneous rotatable permutations, we have the following necessary conditions.

\begin{Prop}
    Let $d$ be a positive integer and $F(x,y,z) = (f(x,y,z),f(y,z,x),f(z,x,y)),$ where $f(x,y,z)\in\gf_q[x,y,z]$ is $d$-homogeneous. If $\gcd(d,q-1)\neq1$, then $F$ is not a permutation of $\gf_q^3$.
\end{Prop}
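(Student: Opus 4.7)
The plan is to exploit the $d$-homogeneity of $F$ together with the arithmetic hypothesis $\gcd(d,q-1)\neq 1$ to produce two distinct inputs with the same image under $F$, thereby violating injectivity.

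First I would observe that since each coordinate of $F$ is $d$-homogeneous, the map $F$ itself satisfies
$$
F(\lambda x,\lambda y,\lambda z) \;=\; \lambda^{d}\, F(x,y,z)
$$
for every $\lambda\in\gf_q$ and every $(x,y,z)\in\gf_q^3$. In particular, whenever $\lambda^{d}=1$, we get $F(\lambda x,\lambda y,\lambda z)=F(x,y,z)$.

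Next I would invoke the fact that $\gf_q^{*}$ is cyclic of order $q-1$. The hypothesis $\gcd(d,q-1)\neq 1$ means that the $d$-th power endomorphism of $\gf_q^{*}$ has nontrivial kernel, so there exists $\lambda\in\gf_q^{*}$ with $\lambda\neq 1$ and $\lambda^{d}=1$. One may take $\lambda$ to be a generator of the subgroup of order $\gcd(d,q-1)$ in $\gf_q^{*}$.

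Finally I would pick any nonzero vector, for instance $(x,y,z)=(1,0,0)$. Then $(\lambda x,\lambda y,\lambda z)\neq(x,y,z)$, while the $d$-homogeneity relation forces $F(\lambda x,\lambda y,\lambda z)=F(x,y,z)$. Equivalently, with $(a,b,c)=((\lambda-1)x,(\lambda-1)y,(\lambda-1)z)\in\gf_q^3\setminus\{(0,0,0)\}$, the equation $F(x,y,z)=F(x+a,y+b,z+c)$ admits a solution, contradicting Lemma~\ref{lemma-5}(ii). Hence $F$ is not a permutation. The argument is entirely structural, so there is no real obstacle; the only minor point to be careful with is ensuring $\lambda-1\neq 0$ in $\gf_q$, which follows from the existence of a nontrivial $d$-th root of unity in $\gf_q^{*}$.
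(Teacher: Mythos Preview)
Your proof is correct and follows essentially the same approach as the paper: both exploit the $d$-homogeneity identity $F(\lambda x,\lambda y,\lambda z)=\lambda^d F(x,y,z)$ together with the existence of a nontrivial $\lambda\in\gf_q^*$ with $\lambda^d=1$ to exhibit two distinct preimages with the same image. Your version is slightly more explicit (naming a concrete nonzero vector and invoking Lemma~\ref{lemma-5}(ii)), but the underlying argument is identical.
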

\begin{proof}
If  $\gcd(d,q-1)\neq1$, then there exist some elements $\lambda\in\gf_q\backslash\{1\}$ such that $\lambda^d=1$. Then we have
$$F(\lambda x, \lambda y, \lambda z) = \lambda^d (f(x,y,z),f(y,z,x),f(z,x,y)) = F(x,y,z)$$
for all $x,y,z\in\gf_q$, while $(\lambda x, \lambda y, \lambda z)\neq (x,y,z)$. Thus $F$ is not a permutation of $\gf_q^3.$
\end{proof}

\begin{Rem}
    In this paper, we mainly consider permutations $F$ of $\gf_{2^m}^3$ from $3$-homogeneous functions. Thus we always have that $m$ is odd since $\gcd(3,2^m-1)=3$ when $m$ is even and then $F$ is not a permutation.
\end{Rem}

When a permutation is constructed, an important thing is to show that it is inequivalent to known results. The following is a frequent equivalent relation among permutations.

\begin{Def}
\cite{wu2017permutation}
    Two permutation polynomials $F(x)$ and $G(x)$ in $\gf_q[x]$ are called quasi-multiplicative (QM for short) equivalence if there exists an integer $1\le d\le q-1$ with $\gcd(d,q-1)=1$ and $F(x)=aG(cx^d)$, where $a,c\in\gf_q^{*}$.
\end{Def}
\begin{Rem}
\label{rem}
    It is obvious that if $F$ and $G$ have different numbers of terms, $F$ and $G$ must be QM-inequivalent.
\end{Rem}

\section{Five infinite classes of $3$-homogeneous rotatable permutations of $\gf_{2^m}^3$}
\label{ch3}

In this section, we construct five infinite classes of $3$-homogeneous rotable permutations of $\gf_{2^m}^3$. Our proofs depend on the lemmas in the above section, the resultant of polynomials, and some skills of exponential sums.



\begin{Th}
    \label{th-4}
    Let $m$ be odd and
    $F(x,y,z)=(f(x,y,z),f(y,z,x),f(z,x,y))$ be a function from $\gf_{2^m}^3$ to itself, where $$f(x,y,z)=x^3+y^3+x^2z+xy^2+yz^2.$$
 Then $F$ is a permutation of $\gf_{2^m}^3$.
\end{Th}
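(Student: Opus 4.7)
The plan is to invoke Lemma \ref{lemma-5}(ii): for every nonzero $(a,b,c)\in\gf_{2^m}^3$, the system $F(x,y,z)=F(x+a,y+b,z+c)$ should have no solution. Writing $g_i$ for the $i$-th component of the difference, a direct expansion in characteristic~$2$ exhibits $g_1$ as a polynomial of $\gf_{2^m}$-degree~$2$ in $(x,y,z)$ with coefficients in $a,b,c$, and $g_2,g_3$ as its cyclic shifts.

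The key structural simplification is that the three pairwise sums collapse to
\[
g_1+g_2 = aP+cR,\qquad g_2+g_3 = aP+bQ,\qquad g_1+g_3 = bQ+cR,
\]
where $P=x^2+ax+a^2$, $Q=y^2+by+b^2$, and $R=z^2+cz+c^2$. Since only two of these sums are independent, the original system is equivalent to $aP+cR=0$ and $aP+bQ=0$ together with one further equation, most conveniently the triple sum $S:=g_1+g_2+g_3 = cx^2+b^2x+ay^2+c^2y+bz^2+a^2z+(a^2c+ab^2+bc^2)$.

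The case $abc=0$ is handled immediately by Lemma \ref{lemma-4}. Assuming $a=0$, the first two pairwise sums force $bQ=0$ and $cR=0$; any nonzero $b$ (respectively $c$) would yield $\tilde y^2+\tilde y+1=0$ (respectively $\tilde z^2+\tilde z+1=0$) after dividing out. Since $\mathrm{Tr}_m(1)=1$ for $m$ odd, Lemma \ref{lemma-4} rules these out, so $b=c=0$, contradicting $(a,b,c)\ne(0,0,0)$. The cases $b=0$ and $c=0$ follow by cyclic symmetry.

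In the principal case $abc\ne 0$, substitute $u=x/a$, $v=y/b$, $w=z/c$ and set $\alpha(t)=t^2+t+1$. The pairwise relations become
\[
a^3\alpha(u)=b^3\alpha(v)=c^3\alpha(w)=:T,
\]
and $T\ne 0$ because $\alpha$ has no root in $\gf_{2^m}$ when $m$ is odd. Substituting these into the normalized $S$ (using $u^2=\alpha(u)+u+1$, and similarly for $v,w$) reduces the residual equation to the single linear-in-$(u,v,w)$ relation
\[
(a^2c+ab^2)\,u + (ab^2+bc^2)\,v + (a^2c+bc^2)\,w \;=\; \frac{T\,(a^2c+ab^2+bc^2)}{abc}.
\]
When $a=b=c$ the three coefficients on the left vanish while the right-hand side equals $T\ne 0$, an immediate contradiction. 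The remaining subcase, in which at least one coefficient on the left is nonzero, is the main obstacle; my plan is to combine this linear equation with the two quadratic constraints $\alpha(v)=(a/b)^3\alpha(u)$ and $\alpha(w)=(a/c)^3\alpha(u)$, eliminating $v$ and $w$ via the resultant (Definition \ref{def-1}) to obtain a univariate polynomial equation in $u$, and then establish its unsolvability via Lemma \ref{lemma-3} or through a character-sum estimate based on Lemma \ref{lemma-1}(iv).
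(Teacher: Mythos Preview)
Your setup is correct and genuinely different from the paper's route. The paper attacks Theorem~\ref{th-4} via Lemma~\ref{lemma-5}(i): it precomposes with the linear bijection $\phi(x,y,z)=(y+z,x+z,x+y+z)$ to obtain $\phi\circ F=(x^3+y^3,\,y^3+z^3,\,xy^2+yz^2+x^2z)$, then for given $(a,b,c)$ computes successive resultants to reduce to a cubic $AY^3+BY^2+CY+D=0$ in $Y=y^3$, and finally verifies the unique-root criterion of Lemma~\ref{lemma-3} by exhibiting an explicit $\beta$ with $\beta^2+A(AD+BC)\beta=(AC+B^2)^3$. Your choice of Lemma~\ref{lemma-5}(ii) leads to the attractive identity $a^3\alpha(u)=b^3\alpha(v)=c^3\alpha(w)$, which the paper never sees; I checked your expressions for $g_1+g_2$, $g_2+g_3$, $S$, and the reduced linear relation, and they are all correct.

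The gap is that you stop exactly where the real work begins. Everything you have actually proved (the cases $abc=0$ and $a=b=c$) is forced instantly by $\mathrm{Tr}_m(1)=1$; the generic subcase with $abc\neq0$ and not all of $a,b,c$ equal is the whole theorem. Your plan ``eliminate $v,w$ by resultants and then appeal to Lemma~\ref{lemma-3} or a character sum'' is not yet a proof for two reasons. First, the linear relation has coefficients $c(a^2+bc)$, $b(ac+b^2)$... wait, $a(ac+b^2)$, $b(ab+c^2)$, $c(a^2+bc)$---any of which can vanish, so the elimination already bifurcates into subcases before you reach a univariate equation. Second, and more seriously, the resulting polynomial in $u$ will be quartic with coefficients that are high-degree polynomials in $a,b,c$; showing it has \emph{no} root in $\gf_{2^m}$ is not a routine application of Lemma~\ref{lemma-3} (which concerns cubics) and will require either a factorisation you have not exhibited or a trace identity of the same flavour and difficulty as the paper's construction of $\beta$. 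The paper's own proof spends several pages on precisely this step (finding $\beta$ by matching coefficients of a degree-$27$ polynomial in $c$), which should calibrate your expectations: the endgame you sketch is not a formality.

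In short: your reduction is clean and arguably more conceptual than the paper's, but as written the proposal handles only the trivial boundary cases and leaves the core computation undone.
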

\begin{proof}
    Let $\phi(x,y,z)=(y+z,x+z,x+y+z)$, which is obviously a permutation from $\mathbb{F}_{2^m}^3$ to $\mathbb{F}_{2^m}^3$. Then $F$ is a permutation if and only if so is $\phi\circ F,$ which is
    $$\phi\circ F(x,y,z)=(x^3+y^3,y^3+z^3,xy^2+yz^2+x^2z).$$

    Let $H=\phi \circ F$. In the following, we prove that $H$ is a permutation.
    According to the definition, it suffices to show that for all $a,b,c\in\gf_{2^m}$, the equations system
    \begin{subequations}
\renewcommand\theequation{\theparentequation.\arabic{equation}}
	      \label{iv-eq1}
	   \begin{empheq}[left={\empheqlbrace\,}]{align}
        &x^3+y^3=a \label{2.1}\\
        &y^3+z^3=b \label{2.2}\\
        &xy^2+yz^2+x^2z=c \label{2.3}
        \end{empheq}
    \end{subequations}
    has unique solution on $\mathbb{F}_{2^m}^3$.


Let $P_1=x^3+y^3+a, P_2 = y^3+z^3+b$ and $P_3=xy^2+yz^2+x^2z+c$. Then $P_1, P_2, P_3$ can be seen as polynomials in $\gf_2[x,y,z,a,b,c]$.
  In order to eliminate the variable $x$, we first compute the resultant of $P_1$ and $P_3$ aiming at $x$ by MAGMA and get
    \begin{equation}
    \label{2.4}
    \begin{aligned}
        g &= R_x(P_1, P_3) \\
        &=  y^9 + y^6a + y^5zc + y^3z^6 + y^3z^3a + y^2z^4c \\
        &\ \ +y^2zac + yz^2c^2 + z^3a^2 + c^3.
    \end{aligned}
    \end{equation}

    Next, we can obtain the resultant of $g$ and $P_2$ aiming at $z$ by MAGMA, that is
    \begin{equation}
    \label{2.5}
    \begin{aligned}
        h&=R_z(g, P_2)\\
        &=(a^6 + a^5b + a^3b^3 +a^2bc^3 + ab^5 + ab^2c^3 + b^6 + c^6)y^9\\
        &+(a^6 b +a^4 b^3 +a^4 c^3 +a^3 b c^3 +a^2 b^5 +a^2 b^2 c^3 +a b^3 c^3 +a c^6 +b^4 c^3+b c^6)y^6\\
        &+(a^6 b^2 +a^5 b^3 +a^4 b^4 +a^3 b^2 c^3 +a^2 b^3 c^3 +a^2 c^6 +b^2 c^6)y^3\\
        &+(a^6 b^3 + a^4 b^2 c^3 + a^2 b c^6 + c^9).
    \end{aligned}
    \end{equation}
    According to the resultant theory, if $(x_0,y_0,z_0)$ is a solution of the equation system \eqref{iv-eq1}, then $y_0$ is a root of $h$. Thus we consider $h=0$.

    Assume $Y=y^3$, Then the equation $h=0$ becomes
    \begin{equation}
    \label{2-1}
        AY^3+BY^2+CY+D=0
    \end{equation}
    where
    $$
    \begin{cases}
        A=a^6 + a^5b + a^3b^3 +a^2bc^3 + ab^5 + ab^2c^3 + b^6 + c^6,\\
        B=a^6 b +a^4 b^3 +a^4 c^3 +a^3 b c^3 +a^2 b^5 +a^2 b^2 c^3 +a b^3 c^3 +a c^6 +b^4 c^3+b c^6,\\
        C=a^6 b^2 +a^5 b^3 +a^4 b^4 +a^3 b^2 c^3 +a^2 b^3 c^3 +a^2 c^6 +b^2 c^6,\\
        D=a^6 b^3 + a^4 b^2 c^3 + a^2 b c^6 + c^9.
    \end{cases}
    $$
    Note that $A = (a^2 + a b + b^2 + b c + c^2)(a^2 + a b + a c + b^2 + c^2)(a^2 + a b + a c + b^2 + b c + c^2)$.
Now we consider the condition of $A=0$, which implies $a^2 + a b + b^2 + b c + c^2=0$, or $a^2 + a b + a c + b^2 + c^2=0$, or $a^2 + a b + a c + b^2 + b c + c^2=0$. If $a^2 + a b + b^2 + b c + c^2=(a+c)^2+(a+c)b+b^2=0$ holds, we have
$$\frac{a^2}{b^2}+\frac{a}{b}+1+\frac{c}{b}+\frac{c^2}{b^2}=0.$$
when $b\neq 0$, and the trace of the left side of the equation above is
$$\tr_m\left(\frac{a^2}{b^2}+\frac{a}{b}+1+\frac{c}{b}+\frac{c^2}{b^2}\right)=\tr_m(1)\neq \tr_m(0),$$
which is a contradiction! Therefore, we have $b=0$ and $a=c$. Similarly, $a^2 + a b + a c + b^2 + c^2=0$ implies $a=0$ and $b=c$. And $a^2 + a b + a c + b^2 + b c + c^2=(a+c)(a+b)+(a+b)^2+(a+c)^2=0$ implies $a+b=0$ and $a+c=0$, i.e. $a=b=c$.

Therefore, $A=0$ if and only if any one of the following three conditions is satisfied: 

(i) $b=0$ and $a=c$; 

(ii) $a=0$ and $b=c$; 

(iii) $a=b=c$, 

We first consider the most special case $a=b=c=0$. In this case, by Eq. \eqref{2.1} and Eq. \eqref{2.2}, we have $x=y=z$ since $\gcd(3,2^m-1)=1$. Moreover, by Eq. \eqref{2.3}, we have $x^3=0$, and thus $(0,0,0)$ is the unique solution of the equation system \eqref{iv-eq1}. Next, we consider the case $b=0$ and $a=c\neq0$. In this case, by Eq. \eqref{2.2}, we have $y=z$. Plugging it and $a=c$ into Eq. \eqref{2.3}, we get $xy^2+y^3+x^2y=a$, i.e., $(x+y)^3+x^3=a$. Since  $\gcd(2^m-1,3)=1$, there exists the inverse of $3$ module $2^m-1$, denoted by $1/3$. Together with Eq. \eqref{2.1},  we have $(x+y)^3=y^3$ which implies $x=0, y=a^{1/3}$. So $(0,a^{1/3},a^{1/3})$ is the unique solution of the equation system \eqref{iv-eq1}. For the case $a=0$ and $b=c\neq0$, we can also show that the equation system \eqref{iv-eq1} has a unique solution. Since the discussion is similar to that of the above case, we omit it here. Finally, we consider the case $a=b=c\neq0$. By Eq. \eqref{2.1} and Eq. \eqref{2.2}, we know $x^3=z^3$, i.e., $x=z$. Plugging it and $a=b=c$ into Eq. \eqref{2.3}, we have $xy^2+x^2Y+x^3=(x+y)^3+y^3=a$. Together with Eq. \eqref{2.1}, we get $(x+y)^3=x^3$ which implies $y=0$ and $x=a^{1/3}$. Therefore, $(a^{1/3},0,a^{1/3})$ is the unique solution of the equation system \eqref{iv-eq1}. Thus when $A=0$, the equation system \eqref{iv-eq1} has exactly one solution.

In the following, we consider that $A\neq0$. Let $Y=Y_1+A^{-1}B$, then Eq. \eqref{2-1} becomes
     \begin{equation}
     \label{2.6}
         Y_1^3+A^{-2}(AC+B^2)Y_1+A^{-2}(AD+BC)=0.
     \end{equation}
Note that $AD+BC=c^3(ab^2+c^3)(a^2b+b^3+c^3)(a^2b+ab^2+c^3)(a^3+a^2b+c^3)$.
If  $AD+BC=0$, it implies that at least one of the following five conditions holds:

(i) $c=0$; 

(ii) $ab^2+c^3=0$; 

(iii) $a^2b+b^3+c^3=0$;
 
(iv) $a^2b+ab^2+c^3=0$; 

(v)  $a^3+a^2b+c^3=0$. 

Moreover, Eq. \eqref{2-1} becomes
$$
Y_1^3+A^{-2}(AC+B^2)Y_1=0.
$$
If $AC+B^2=0$, the above equation has exactly only one solution. If $AC+B^2\neq 0$,
obviously, $Y_1=0$ and $Y_1=A^{-1}(AC+B^2)^{1/2}$ are the solutions. Due to $ Y_1 =y^3+A^{-1}B$, we get two possible solutions of $y^3$: $A^{-1}B$ or $(A^{-1}C)^{1/2}$. We claim that only one of them satisfies the equation system  \eqref{iv-eq1}.

Note that $AC+B^2=abc^3(a+b)(a^2 + a b + b^2)(a^2 b + a b^2 + c^3)(a^3 + a b^2 + b^3 + c^3)\neq 0$, which implies that $a,b,c \neq 0$ and $a+b\neq 0$. Now we discuss the five conditions of $AD+BC=0$ when $AC+B^2\neq 0$. Firstly, when condition (i) or (iv) holds, we get $AC+B^2=0$, which makes Eq. \eqref{2-1} have only one solution. Next, we discuss the remaining conditions. When condition (ii) holds, plugging $ab^2+c^3=0$ into $A, B, C, D$, we get
$$
\begin{cases}
    A=(a+b)(a^5+b^5),\\
    B=ab(a+b)^5,\\
    C=a^2b^2(a^4+a^3b+a^2b^2+ab^3+b^4),\\
    D=a^3b^3(a+b)^3.
\end{cases}
$$
Then $A^{-1}B=\frac{ab(a+b)^4}{a^5+b^5}$ and $(A^{-1}C)^{1/2}=\frac{ab}{a+b}$. Note that $A^{-1}B\neq (A^{-1}C)^{1/2}$ since $a+b\neq 0$ and $a,b\neq 0$. Plugging $y^3=\frac{ab(a+b)^4}{a^5+b^5}$ into     Eq. \eqref{2.1} and Eq. \eqref{2.2} resp. , we have $x^3=\frac{a^5(a+b)}{a^5+b^5}$ and $z^3=\frac{b^5(a+b)}{a^5+b^5}$. Then plugging them into Eq. \eqref{2.3}, the left side is
$$
\begin{aligned}
    &\frac{(a^7b^2(a+b)^9)^{1/3}+(ab^{11}(a+b)^6)^{1/3}+(a^{10}b^5(a+b)^3)^{1/3}}{a^5+b^5}\\
    &=\frac{a^2(a+b)^3+b^3(a+b)^2+a^3b(a+b)}{a^5+b^5}(ab^2)^{1/3}=c,
\end{aligned}
$$
which satisfies the equation system  \eqref{iv-eq1}.

Further, plugging $y^3=\frac{ab}{a+b}$ into Eq. \eqref{2.1} and Eq. \eqref{2.2}, we have $x^3=\frac{a^2}{a+b}$ and $z^3=\frac{b^2}{a+b}$. Finally, plugging them into Eq. \eqref{2.3}, the left side of the equation is
$$
\begin{aligned}
    &\frac{(a^4b^2)^{1/3}+(ab^5)^{1/3}+(a^4b^2)^{1/3}}{a+b}\\
    &=\frac{b}{a+b}(ab^2)^{1/3}\\
    &=\frac{b}{a+b}c\neq c,
\end{aligned}
$$
which is a contradiction. Therefore, Eq. \eqref{iv-eq1} has only one solution
$$\left( \left(\frac{a^5(a+b)}{a^5+b^5}\right)^{1/3},\left(\frac{ab(a+b)^4}{a^5+b^5}\right)^{1/3}, \left(\frac{b^5(a+b)}{a^5+b^5}\right)^{1/3} \right).$$

When the condition (iii) holds, similarly plugging $a^2b+b^3=c^3$ into $A, B, C, D$, we get
$$
\begin{cases}
    A=a^2(a^4+a^3b+b^4),\\
    B=a^2b^5,\\
    C=b^4(a^4+a^3b+b^4),\\
    D=b^9.
\end{cases}
$$
Thus $y^3=\frac{b^5}{a^4+a^3b+b^4}$ or $\frac{b^2}{a}$.

On the one hand, we get the solution $x=(\frac{(a+b)^5}{a^4+a^3b+b^4})^{1/3},y=(\frac{b^5}{a^4+a^3b+b^4})^{1/3},z=(\frac{a^3b(a+b)}{a^4+a^3b+b^4})^{1/3}$ from  $y^3=\frac{b^5}{a^4+a^3b+b^4}$, Eq. \eqref{2.1} and Eq. \eqref{2.2}. After plugging them into Eq. \eqref{2.3}, we have
$$
\begin{aligned}
    &\frac{(b^{10}(a+b)^5)^{1/3}+( a^6b^7(a+b)^2)^{1/3}+(a^3b(a+b)^{11})^{1/3}}{a^4+a^3b+b^4}\\
    &=\frac{b^3(a+b)+a^2b^2+a(a+b)^3}{a^4+a^3b+b^4}(b(a+b)^2)^{1/3}=c,
\end{aligned}
$$
which satisfies the equation system  \eqref{iv-eq1}.

On the other hand, if $y^3=\frac{b^2}{a}$, we compute the solution of $x,y,z$ and plug it into Eq. \eqref{2.3}, then
$$
\begin{aligned}
    &\frac{(b^4(a+b)^2)^{1/3}+(b^4(a+b)^2)^{1/3}+(b(a+b)^5)^{1/3}}{a}\\
    &=\frac{a+b}{a}c\neq c,
\end{aligned}
$$
which is a contradiction. And for condition (v) which is similar to (iii), we omit the proof here. Therefore when $AB+BC=0$, the equation system \eqref{iv-eq1} has exactly one solution. 

Finally, considering the case $AD+BC\neq 0$, we have
    \begin{equation*}
    \label{2.7}
        \mathrm{Tr}_m\left(\frac{[A^{-2}(AC+B^2)]^3}{[A^{-2}(AD+BC)]^2}\right)=\tr_m\left(\frac{(AC+B^2)^3}{A^2(AD+BC)^2}\right).
    \end{equation*}
    If there exists $\alpha \in \mathbb{F}_{2^m}$ such that
    \begin{equation}
    \label{2.8}
        \alpha^2+\alpha= \frac{(AC+B^2)^3}{A^2(AD+BC)^2},
    \end{equation}
    then  $\mathrm{Tr}_m\left(\frac{(AC+B^2)^3}{A^2(AD+BC)^2}\right)=0,$ and by Lemma \ref{lemma-3}, Eq. \eqref{2.6} has only one solution and thus so does Eq. \eqref{2-1}.

    In the following, we will find the element $\alpha\in\gf_{2^m}$ satisfying Eq. (\ref{2.8}).

    Let $\alpha=\frac{\beta}{A(AD+BC)}$. Then Eq. (\ref{2.8}) is equivalent to
    \begin{equation}
    \label{2.9}
        \beta^2+A(AD+BC)\beta=(AC+B^2)^3.
    \end{equation}
    Let $K_1=(AC+B^2)^3$, $K_2=A(AD+BC)$, i.e.
    \begin{align*}
        K_1=&a^{27} b^6 c^9 + a^{26} b^7 c^9 + a^{25} b^5 c^{12 }+ a^{24} b^6 c^{12} + a^{23} b^7 c^{12} + a^{23} b^4 c^{15} + a^{22} b^{11} c^9 +\\
    &a^{22} b^8 c^{12} + a^{21} b^3 c^{18} + a^{20} b^{10} c^{12} + a^{20} b^7 c^{15} + a^{20} b^4 c^{18} + a^{19} b^{14} c^9 + a^{19} b^{11} c^{12} +\\
    &a^{19} b^5 c^{18} + a^{18} b^{15} c^{9} + a^{18} b^{12} c^{12} + a^{18} b^9 c^{15} + a^{18} b^6 c^{18} + a^{18} b^3 c^{21} + a^{17} b^{16} c^9 +\\
    &a^{17} b^{10} c^{15} + a^{17} b^7 c^{18} + a^{17} b^4 c^{21} + a^{16} b^{17} c^9 + a^{16} b^{11} c^{15} + a^{15} b^{18} c^9 + a^{15} b^{15} c^{12} +\\
    &a^{15} b^3 c^{24} + a^{14} b^{16} c^{12} + a^{14} b^7 c^{21} + a^{14} b^4 c^{24} + a^{13} b^{20} c^9 + a^{13} b^{17} c^{12} + a^{13} b^8 c^{21} +\\
    &a^{12} b^{12} c^{18} + a^{12} b^9 c^{21} + a^{12} b^3 c^{27} + a^{11} b^{22} c^9 + a^{11} b^{16} c^{15} + a^{11} b^{10} c^{21} + a^{11} b^7 c^{24} +\\
    &a^{10} b^{20} c^{12} + a^{10} b^{11} c^{21} + a^9 b^{21} c^{12} + a^9 b^{12} c^{21} + a^9 b^6 c^{27} + a^8 b^{25} c^9 + a^8 b^{19} c^{15} +\\
    &a^8 b^{16} c^{18} + a^8 b^{13} c^{21} + a^8 b^{10} c^{24} + a^7 b^{23} c^{12} + a^7 b^{17} c^{18} + a^7 b^{14} c^{21} + a^6 b^{27} c^9 +\\
    \end{align*}
    \begin{align*}
    &a^6 b^{21} c^{15} + a^6 b^9 c^{27} + a^5 b^{25} c^{12} + a^5 b^{22} c^{15} + a^5 b^{19} c^{18} + a^5 b^{13} c^{24} + a^4 b^{23} c^{15} +\\
    &a^4 b^{17} c^{21} + a^3 b^{21} c^{18} + a^3 b^{18} c^{21} + a^3 b^{15} c^{24} + a^3 b^{12} c^{27},
    \end{align*}

    $$
    \begin{aligned}
        K_2=&a^{14} b^4 c^3 + a^{13} b^5 c^3 + a^{13} b^2 c^6 + a^{12} b^3 c^6 + a^{11} b^7 c^3 + a^{10} b^8 c^3 + a^{10} b^5 c^6 + a^{10} b^2 c^9 +\\
    &a^9 b^6 c^6 + a^9 c^{12} + a^8 b^{10} c^3 + a^8 b^7 c^6 + a^8 b^4 c^9 + a^7 b^{11} c^3 + a^7 b^8 c^6 + a^6 b^6 c^9 +\\
    &a^6 b^3 c^{12} + a^6 c^{15} + a^5 b^{13} c^3 + a^5 b^{10} c^6 + a^5 b^4 c^{12} + a^4 b^{14} c^3 + a^4 b^5 c^{12} + a^4 b^2 c^{15} +\\
    &a^3 b^6 c^{12} + a^3 c^{18} + a^2 b^{13} c^6 + a^2 b^{10} c^9 + a b^8 c^{12} + a b^2 c^{18} + b^9 c^{12} + b^6 c^{15} + b^3 c^{18} +\\
    &c^{21}.
    \end{aligned}
    $$

    Next, consider $K_1$, $K_2$ and $\beta$ as functions with respect to $c$. The degree of $K_1$ is 27 while the degree of $K_2$ is 21. Therefore, we can assume that $\beta$ has the form below.

    \begin{equation}
        \label{2.10}
    \beta=p_6c^6+p_5c^5+p_4c^4+p_3c^3+p_2c^2+p_1c+p_0.
    \end{equation}
    And we can also change $K_1,\ K_2$ into the form like equation (\ref{2.10}).
    $$
    K_1=t_1 c^{27}+ t_2 c^{24}+t_3 c^{21}+t_4 c^{18}+ t_5 c^{15} +t_6 c^{12}+t_7 c^9;
    $$
    $$
    K_2=s_1 c^{21}+s_2 c^{18}+ s_3 c^{15}+ s_4 c^{12}+ s_5 c^{9}+s_6 c^6+ s_7 c^3,
    $$
in which $t_i,\ s_i \in \mathbb{F}_2[a,b]$, for $i=1,2,...,7$.

    Then the equation (\ref{2.9}) is equivalent to
    $$
    \begin{aligned}
        \beta^2+K_2\beta+K_1=&(p_6 s_1+t_1)c^{27}  + p_5 s_1c^{26}  + p_4 s_1 c^{25} + (p_6 s_2 + p_3 s_1+t_2)c^{24} \\
        &+( p_5 s_2 +p_2 s_1)c^{23} +  (p_4 s_2 + p_1 s_1)c^{22} +(p_6 s_3 + p_3 s_2 + p_0 s_1 + t_3)c^{21}\\
        &+(p_5 s_3 + p_2 s_2) c^{20} + (p_4 s_3 +p_1 s_2)c^{19} + (p_6 s_4 + p_3 s_3 + p_0 s_2+ t_4)c^{18} \\
        &+(p_5 s_4 + p_2 s_3)c^{17} +(p_4 s_4 + p_1 s_3)c^{16} + (p_6 s_5 + p_3 s_4 + p_0 s_3 + t_5)c^{15} \\
        &+(p_5 s_5 + p_2 s_4)c^{14}  + (p_4 s_5 + p_1 s_4)c^{13}  + (p_6^2 + p_6 s_6 + p_3 s_5 + p_0 s_4+ t_6)c^{12}  \\
        &+(p_5 s_6 + p_2 s_5)c^{11} +(p_5^2 + p_4 s_6 + p_1 s_5)c^{10} +(p_6 s_7 + p_3 s_6 + p_0 s_5 + t_7)c^9 \\
        &+(p_5 s_7 + p_4^2 + p_2 s_6)c^8 +(p_4 s_7 + p_1 s_6)c^7  + (p_3^2 + p_3 s_7 + p_0 s_6)c^6 \\
        &+(p_2 s_7 c^5  + (p_2^2 + p_1 s_7)c^4 +  p_0 s_7 c^3 + p_1^2 c^2 + p_0^2)\\
        =&0.
    \end{aligned}
    $$
    From the coefficients of $c^2, c^4, c^7$ and $c^8$, we know that
    $$p_0=p_1=p_2=p_4=p_5=0.$$
    Further, we can get
    $$
    \begin{cases}
        p_6 s_1+t_1=0,\\
        p_6 s_2 + p_3 s_1 +t_2=0,
    \end{cases}
    $$
    with $s_1=1$, $s_2=a^3 + a b^2+ b^3$, $t_1=a^{12} b^3 +a^9 b^6+ a^6 b^9+ a^3 b^{12} $.
    Thus, we get
    $$
    \beta=a^3b^3c^3(a + b)^3(a^2 + a b + b^2)^3(a^2 b + a b^2 + c^3).
    $$
    By Lemma \ref{lemma-3}, we can know that $h=0$ has only one solution, which means that $H$ is a permutation.
\end{proof}

\begin{Th}
    \label{th-5}
    Let $m$ be odd and $F(x,y,z)=(f(x,y,z),f(y,z,x),f(z,x,y))$ be a function from $\gf_{2^m}^3$ to itself, where $$f(x,y,z)=x^3+x^2y+xy^2+x^2z+yz^2.$$
    Then $F$ is a permutation of $\gf_{2^m}^3$.
\end{Th}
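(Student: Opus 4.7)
The plan is to follow the same strategy as in the proof of Theorem \ref{th-4}: find an invertible $\gf_2$-linear permutation $\phi$ of $\gf_{2^m}^3$ making $H=\phi\circ F$ simpler to analyze, transfer the permutability of $H$ back to $F$ via Lemma \ref{lemma-2}, and then prove $H$ is a permutation by reducing the system $H(x,y,z)=(a,b,c)$ to a single univariate equation by iterated resultants. For the current $f(x,y,z)=x^3+x^2y+xy^2+x^2z+yz^2$, a direct computation gives the cyclic identities
\begin{align*}
f(x,y,z)+f(y,z,x)&=x^3+y^3+x^2y+y^2z,\\
f(x,y,z)+f(y,z,x)+f(z,x,y)&=(x+y+z)^3.
\end{align*}
I would therefore try $\phi(x,y,z)=(x+y,\,y+z,\,x+y+z)$, which is invertible over $\gf_2$ and satisfies
\[
H(x,y,z)=\bigl(\,x^3+y^3+x^2y+y^2z,\;y^3+z^3+y^2z+xz^2,\;(x+y+z)^3\bigr).
\]
Since $\gcd(3,2^m-1)=1$ (as $m$ is odd), the third component $H_3=c$ immediately yields $x+y+z=c^{1/3}$, eliminating one variable from the remaining two equations and reducing the problem to a two-variable cubic system in $(y,z)$ with parameters $a,b$ and $s=c^{1/3}$.

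From here I would mimic Theorem \ref{th-4}: view the two remaining equations as polynomials $P_1,P_2\in\gf_2[y,z,a,b,s]$, eliminate $z$ by computing $h=R_z(P_1,P_2)\in\gf_2[a,b,s][y]$ using MAGMA, and substitute $Y=y^3$ to reduce $h=0$ to a cubic $AY^3+BY^2+CY+D=0$ with $A,B,C,D\in\gf_2[a,b,s]$. The analysis then splits as in Theorem \ref{th-4}. If $A=0$, I would factor $A$ into irreducibles over $\gf_2[a,b,s]$; each vanishing factor, via Lemma \ref{lemma-4}, forces a short list of degenerate constraints on $(a,b,s)$, and for each case I verify by direct substitution into the original system that a unique preimage $(x,y,z)$ exists. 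If $A\neq 0$ but $AD+BC=0$, depression $Y=Y_1+A^{-1}B$ turns the cubic into $Y_1^3+A^{-2}(AC+B^2)Y_1=0$, producing at most two candidate values for $y^3$; I then plug each back into the original system to discard the spurious one.

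The main obstacle, exactly as in Theorem \ref{th-4}, is the generic case $A\neq 0$ and $AD+BC\neq 0$. Here Lemma \ref{lemma-3} reduces unique solvability of the depressed cubic to verifying
\[
\tr_m\!\left(\frac{(AC+B^2)^3}{A^2(AD+BC)^2}\right)=0.
\]
I would reuse the ansatz $\alpha=\beta/\bigl(A(AD+BC)\bigr)$, which reduces the trace identity to exhibiting a polynomial $\beta\in\gf_2[a,b,s]$ satisfying $\beta^2+A(AD+BC)\,\beta=(AC+B^2)^3$. Comparing degrees in $s$, I would write $\beta=\sum_i p_i(a,b)\,s^i$ with the $p_i$ undetermined, and equate coefficients of powers of $s$ to solve for $\beta$ explicitly. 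The delicate step is the bookkeeping: confirming the resulting linear/quadratic system in the $p_i$ is consistent, exhibiting a closed form for $\beta$ analogous to the $\beta=a^3b^3c^3(a+b)^3(a^2+ab+b^2)^3(a^2b+ab^2+c^3)$ obtained in Theorem \ref{th-4}, and checking the polynomial identity symbolically. Once $\beta$ is produced, Lemma \ref{lemma-3} closes the generic case and, combined with the degenerate case analysis, shows that $H$—and hence $F$ by Lemma \ref{lemma-2}—is a permutation of $\gf_{2^m}^3$.
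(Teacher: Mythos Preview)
Your plan is a coherent adaptation of the Theorem~\ref{th-4} machinery, and your preliminary identities are correct: one indeed has $f(x,y,z)+f(y,z,x)=x^3+y^3+x^2y+y^2z$ and $\sum f=(x+y+z)^3$, so the triangular composition with $\phi$ works as you describe. However, the paper proves this theorem by a genuinely different route, and the comparison is instructive.

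Instead of Lemma~\ref{lemma-5}(i) (unique preimage), the paper invokes Lemma~\ref{lemma-5}(ii): it shows that $F(x+a,y+b,z+c)+F(x,y,z)=0$ has no solution for $(a,b,c)\neq(0,0,0)$. Because $F$ is cubic, this difference system is only \emph{quadratic} in $(x,y,z)$. Summing the three components collapses everything to a single quadratic $(x+y+z)^2+(a+b+c)(x+y+z)+(a+b+c)^2=0$, which has no root by Lemma~\ref{lemma-4} whenever $a+b+c\neq0$. In the remaining case $a+b+c=0$ the paper takes one resultant $R_x(Q_1,Q_2)$, obtaining a degree-$4$ polynomial in $(y,z)$, and then counts its zeros via an exponential-sum identity (Lemma~\ref{lemma-1}(iv)) rather than by producing an Artin--Schreier witness $\beta$. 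The payoff is that the exponential sum reduces to a two-element support set $I$ and is shown to vanish by a single trace evaluation $\tr_m\!\bigl(1+\tfrac{1}{1+t}+\tfrac{1}{(1+t)^2}\bigr)=1$.

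Your approach would instead produce a degree-$9$ (or similar) resultant and a full cubic-in-$Y$ analysis with several degenerate subcases. Two cautions: first, unlike Theorem~\ref{th-4} where the auxiliary equations $x^3+y^3=a$, $y^3+z^3=b$ forced the final resultant to lie in $\gf_2[a,b,c][y^3]$, your reduced system $P_1,P_2$ contains genuine mixed terms like $x^2y$, $sy^2$, $sx^2$, so there is no a priori reason the resultant $h$ is a polynomial in $y^3$; the substitution $Y=y^3$ may not be available, and you would need a different reduction. Second, the existence of the polynomial $\beta$ solving $\beta^2+K_2\beta=K_1$ is not automatic---it is a special feature that must be verified by computation for each $f$. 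So while your strategy is not wrong in principle, the paper's difference-equation route sidesteps both obstacles by lowering the degree at the outset and replacing the $\beta$-search with a short exponential-sum argument.
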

\begin{proof}
    From Lemma \ref{lemma-5}(ii), it suffices to show that for all $(a,b,c) \in \mathbb{F}_{2^m}^3\backslash\{(0,0,0)\}$, the equation
    \begin{equation}
    \label{1.1}
        F(x+a,y+b,z+c)+F(x,y,z)=(0,0,0)
    \end{equation}
    has no solution in $\mathbb{F}_{2^m}^3$. It is trivial that Eq. \eqref{1.1} is equivalent to the equation system
    \begin{subequations}
    \renewcommand\theequation{\theparentequation.\arabic{equation}}
	      \label{v-eq1}
	   \begin{empheq}[left={\empheqlbrace\,}]{align}
        &(a+b+c)x^2  + (a + b)^2x + ay^2 + (a+ c)^2y + bz^2 + a^2z=f(a,b,c) ,\label{1.2.a}\\
        &cx^2 + b^2x + (a+b+c)y^2+ (b + c)^2y + bz^2 + (a +b)^2z=f(b,c,a) ,\label{1.2.b}\\
        &cx^2 + (b+ c)^2x + ay^2 + c^2y + (a+b+c)z^2+ (a + c)^2z=f(c,a,b) \label{1.2.c}.
        \end{empheq}
    \end{subequations}
    Adding the left side of the equation system \eqref{1.2.a},  \eqref{1.2.b} and  \eqref{1.2.c} together, we get
    \begin{equation*}
        (a+b+c)(x+y+z)(x+y+z+a+b+c)=(a+b+c)^3.
    \end{equation*}

    \textbf{Case 1}. When $a+b+c\neq 0$, the equation above is equivalent to
    $$(x+y+z)^2+(a+b+c)(x+y+z)+(a+b+c)^2=0,$$
    which has no solution in $\mathbb{F}_{2^m}^3$.

    \textbf{Case 2}. When $a+b+c=0$, Eq. \eqref{1.2.b} and Eq. \eqref{1.2.c} can be simplified as
    \begin{subequations}
    \renewcommand\theequation{\theparentequation.\arabic{equation}}
	      \label{v-eq2}
	   \begin{empheq}[left={\empheqlbrace\,}]{align}
        &(a+b)x^2 + b^2x + a^2y + bz^2 + (a+b)^2z=a^3+b^3 ,\label{1.3.a}\\
        &(a+b)x^2 + a^2x + ay^2 + (a+b)^2y +  b^2z=(a+b)^3+b^3 \label{1.3.b}.
        \end{empheq}
    \end{subequations}
    Let $Q_1=(a+b)x^2 + b^2x + a^2y + bz^2 + (a+b)^2z+a^3+b^3$ and $Q_2=(a+b)x^2 + a^2x + ay^2 + (a+b)^2y +  b^2z+(a+b)^3+b^3$, which can be seen as polynomials in $\gf_2[x,y,z,a,b,c]$.
    Then we compute the resultant of $Q_1, Q_2$ to eliminate the variable $x$ by MAGMA, that is
    \begin{equation}
    	\label{eq16}
        \begin{aligned}
            R_x(Q_1,Q_2)=(a+b)^2&[a^2y^4+b^2(a^2+ab+b^2)y^2+(a+b)(a^2+ab+b^2)^2y\\
            &+b^2z^4+a^2(a^2+ab+b^2)z^2+(a+b)(a^2+ab+b^2)^2z\\
            &+(a^2+ab+b^2)^3].
        \end{aligned}
    \end{equation}

    Firstly, if $a+b=0$, plug it into the summation of Eq. \eqref{1.3.a} and Eq. \eqref{1.3.b}, we have
    \begin{equation*}
       a^2y+az^2+ay^2+a^2z+a^3 = 0.
    \end{equation*}
    Let $Y=a^{-1}y$ and $\ Z=a^{-1}z$ since $a \neq0$ (otherwise we have $a=b=c=0$). The equation above is equivalent to
    $$a^3((Y+Z)^2+(Y+Z)+1)\neq0.$$
    Therefore, the equation system \eqref{v-eq1} has no solution in $\gf_{2^m}^3$.

    Secondly, if $a+b\neq 0$, we consider the condition that
    $a=0$, then $b\neq 0$.
    Let $Y=b^{-1}y$ and $Z=b^{-1}z$.  Eq. \eqref{eq16} is equivalent to
    $$R_x(Q_1,Q_2)=b^6((Y+Z^2+Z)^2+(Y+Z^2+Z)+1)\neq 0,$$
    which means that the equation system \eqref{v-eq1} has no solution in $\gf_{2^m}^3$.

    Next, we consider that $a\neq 0$.  Let $Y=a^{-1}y$, $Z=a^{-1}z$ and  $t= a^{-1}b$. Dividing Eq. \eqref{eq16} by $a^6(a+b)^2$, we can obtain that
    \begin{equation*}
        \begin{aligned}
         \frac{R_x(Q_1,Q_2)}{a^6(a+b)^2}=&Y^4+t^2(1+t+t^2)Y^2+(1+t)(1+t+t^2)^2Y\\
        &+t^2Z^4+(1+t+t^2)Z^2+(1+t)(1+t+t^2)^2Z\\
        &+(1+t+t^2)^3\triangleq D(Y,Z),
    \end{aligned}
    \end{equation*}
   where $D(Y,Z)$ is a function of $Y,Z$ on $\gf_{2^m}$. Then Eq. \eqref{eq16} has no solution if and only if $D(Y, Z)\neq 0$ for all $Y,Z\in \gf_{2^m}$, i.e.,
    \begin{equation}
    \label{1.5}
       \#\{(Y,Z)\in \gf_{2^m}^2~|~D(Y,Z)=0 \}=0.
    \end{equation}
    Based on the basic knowledge of exponential sums, we have the formula below
    \begin{equation*}
    \begin{aligned}
        &\#\{(Y,Z)\in \gf_{2^m}^2~|~D(Y,Z)=0 \}\\
        &=\frac{1}{2^m}\sum\limits_{Y,Z \in \mathbb{F}_{2^m}}\sum\limits_{\omega \in \mathbb{F}_{2^m}}(-1)^{\mathrm{Tr}_m(D(Y,Z)\omega)}\\
        &=\frac{1}{2^m}\sum\limits_{\omega \in \mathbb{F}_{2^m}}(-1)^{\mathrm{Tr}_m((1+t+t^2)^3\omega)}
            \sum\limits_{Y \in \mathbb{F}_{2^m}}(-1)^{\mathrm{Tr}_m(M_1(\omega)Y^4)}\quad\ \sum\limits_{Z \in \mathbb{F}_{2^m}}(-1)^{\mathrm{Tr}_m(M_2(\omega)Z^4)}\\
        &\triangleq \frac{1}{2^m}S,
    \end{aligned}
    \end{equation*}
    where $M_1(\omega)=\omega+\omega^2t^4(1+t+t^2)^2+\omega^4(1+t)^4(1+t+t^2)^8$, $M_2(\omega)=\omega t^2+\omega^2(1+t+t^2)^2+\omega^4(1+t)^4(1+t+t^2)^8$. Note that the second equality sign holds due to the properties of the trace function.

    Further, by Lemma \ref{lemma-1}, we have
    $$
    S=2^{2m} \sum\limits_{\omega \in \mathbb{F}_{2^m},M_1(\omega)=M_2(\omega)=0}(-1)^{\mathrm{Tr}_m((1+t+t^2)^3\omega)}.
    $$
    Let $I=\{ \omega\in \gf_{2^m}~|~M_1(\omega)=M_2(\omega )=0\}$. When $\omega \in I$, it satisfies that
    \begin{equation}
    \label{1.7}
        \begin{cases}       \omega+\omega^2t^4(1+t+t^2)^2+\omega^4(1+t)^4(1+t+t^2)^8=0,\\
        \omega t^2+\omega^2(1+t+t^2)^2+\omega^4(1+t)^4(1+t+t^2)^8=0.
        \end{cases}
    \end{equation}
Computing the sum of the two equations in Eq. \eqref{1.7}, we can get
    $$
    (1+t)^2\omega+(1+t)^4(1+t+t^2)^2\omega^2=0.
    $$
    Since $a+b\neq0$ and $1+t+t^2\neq 0$ , we know that
    $\omega=0$ and  $\omega=\frac{1}{(1+t)^2(1+t+t^2)^2}$ are the solutions. Moreover, we can easily verify that $$I= \left\{0, \frac{1}{(1+t)^2(1+t+t^2)^2}\right\}.$$

    Therefore, we have
    \begin{equation*}
        \begin{aligned}
        S&=2^{2m}\sum\limits_{\omega \in I}(-1)^{\mathrm{Tr}_m\left((1+t+t^2)^3\omega\right)}\\
        &=2^{2m}\left[(-1)^0+(-1)^{\mathrm{Tr}_m\left(\frac{1+t+t^2}{1+t^2}\right)}\right]\\
        &=2^{2m}\left[(-1)^0+(-1)^{\mathrm{Tr}_m\left(1+\frac{1}{1+t}+\left(\frac{1}{1+t}\right)^2\right)}\right]\\
        &=2^{2m}\left[(-1)^0+(-1)^1\right]=0.
        \end{aligned}
    \end{equation*}

    Thus Eq. \eqref{1.5} holds and Eq. \eqref{eq16} has no solution, which means $F$ is a permutation.
\end{proof}

\begin{Th}
    \label{th1}
    Let $m$ be odd and $F(x,y,z)=(f(x,y,z),f(y,z,x),f(z,x,y))$, where  $$f(x,y,z)=x^3+yz^2+y^2z.$$ Then $F$ is a  permutation of $\gf_{2^m}^3$.
\end{Th}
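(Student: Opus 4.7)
Following the approach of Theorem~\ref{th-5}, the plan is to apply Lemma~\ref{lemma-5}(ii) and show that for every $(a,b,c)\in\gf_{2^m}^3\setminus\{(0,0,0)\}$ the equation $F(x+a,y+b,z+c)=F(x,y,z)$ has no solution. Expanding each component in characteristic $2$ produces the system
\begin{subequations}
\begin{empheq}[left={\empheqlbrace\,}]{align}
&ax^2+a^2x+cy^2+c^2y+bz^2+b^2z=f(a,b,c),\\
&cx^2+c^2x+by^2+b^2y+az^2+a^2z=f(b,c,a),\\
&bx^2+b^2x+ay^2+a^2y+cz^2+c^2z=f(c,a,b),
\end{empheq}
\end{subequations}
and a direct check gives the identity $f(a,b,c)+f(b,c,a)+f(c,a,b)=(a+b+c)^3$. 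Summing the three equations therefore collapses them to $(a+b+c)(x+y+z)^2+(a+b+c)^2(x+y+z)+(a+b+c)^3=0$. When $a+b+c\ne 0$, this is a quadratic in $u=x+y+z$ with discriminant condition $\tr_m(1)=1$ (because $m$ is odd), so Lemma~\ref{lemma-4} rules out every solution.

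In the remaining case $a+b+c=0$, I set $c=a+b$. The three degenerate subcases where one of $a$, $b$, $a+b$ vanishes each collapse the system to an equation of the form $u^2+\lambda u+\lambda^2=0$ for some $\lambda\in\{a,b,a+b\}\setminus\{0\}$ with $u$ a linear combination of $x,y,z$; Lemma~\ref{lemma-4} again forbids solutions. In the main subcase, where $a,b,a+b$ are all nonzero, the two equations \emph{(2)}.1 and \emph{(2)}.2 form a linear system in $(x^2,x)$ whose determinant is $ab(a+b)\ne 0$. I plan to solve it, obtaining $x$ and $x^2$ as explicit affine functions of $y^2,y,z^2,z$, and then impose the consistency $x^2=(x)^2$; this eliminates $x$ and yields a single bivariate polynomial relation. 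After using the identities $A_1^2+rA_2=A_2^2+rA_1=s^3$ and $a^4+ar=a^2s$ (with $s=a^2+ab+b^2$, $r=ab(a+b)$, $A_1=a^3+a^2b+b^3$, $A_2=a^3+ab^2+b^3$) to pair the $y$- and $z$-contributions symmetrically, the relation should reduce to the univariate quartic
$$p(u):=u^4+su^2+ru+a^2s=0,\qquad u=y+z.$$

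The finish is then very clean. The factorization $u^3+su+r=(u+a)(u+b)(u+a+b)$ gives $u^4+su^2+ru=(u^2+bu)\left((u^2+bu)+a(a+b)\right)$, so setting $v=u^2+bu$ turns $p(u)=0$ into the quadratic $v^2+a(a+b)v+a^2s=0$. By Lemma~\ref{lemma-4}, this quadratic in $v$ has a root in $\gf_{2^m}$ if and only if $\tr_m(s/(a+b)^2)=0$. Writing $\beta=b/(a+b)\in\gf_{2^m}$, a direct calculation gives $s/(a+b)^2=1+ab/(a+b)^2=1+\beta+\beta^2$, and since $\tr_m(\beta^2)=\tr_m(\beta)$ by Lemma~\ref{lemma-1}(iii), we get $\tr_m(s/(a+b)^2)=\tr_m(1)=1\ne 0$ because $m$ is odd. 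Thus no such $v$ lies in $\gf_{2^m}$; and since $u\in\gf_{2^m}$ would force $v=u^2+bu\in\gf_{2^m}$, the equation $p(u)=0$ admits no solution in $\gf_{2^m}$, concluding the proof.

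The step I expect to be the main obstacle is the collapse of the bivariate consistency equation to the univariate quartic $p(u)$: it relies on the somewhat fortuitous cancellations $A_1^2+rA_2=A_2^2+rA_1=s^3$ and $a^4+ar=a^2s$ which pair the $y$- and $z$-parts symmetrically and force the overall factor $s^2$ to divide the remaining expression. Once this reduction is secured, the remaining steps---a factorization through the map $u\mapsto u^2+bu$ and a one-line trace computation---avoid the heavier resultant-plus-exponential-sum machinery employed in Theorem~\ref{th-5}.
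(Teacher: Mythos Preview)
Your proof is correct. I verified every step: the expansion of the difference system, the identity $f(a,b,c)+f(b,c,a)+f(c,a,b)=(a+b+c)^3$, the handling of the degenerate subcases (for $a=0$ it is equation~(2).2, not (2).1, that collapses to $(x+y)^2+b(x+y)+b^2=0$, but you only need one of the three to fail), and---most importantly---the reduction step you flagged as the main obstacle. With $D=ab(a+b)=r$ and $c=a+b$, one finds $Dx=a^2s+s(y^2+z^2)+A_1y+A_2z$ and $Dx^2=as^2+A_2y^2+A_1z^2+s^2(y+z)$, and the consistency $(Dx)^2=D\cdot(Dx^2)$ becomes exactly $s^2\bigl(u^4+su^2+ru+a^2s\bigr)=0$ after your identities $A_i^2+rA_{3-i}=s^3$ and $a^4+ar=a^2s$. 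Since $s=a^2+ab+b^2\ne 0$ for $m$ odd, this gives $p(u)=0$, and your quadratic substitution $v=u^2+bu$ finishes it cleanly.

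Your route is genuinely different from the paper's. The paper attacks this theorem via Lemma~\ref{lemma-5}(i): it shows $F(x,y,z)=(a,b,c)$ has a unique solution by summing the three equations to get $(x+y+z)^3=a+b+c$, then substituting $z=d+x+y$ (with $d=(a+b+c)^{1/3}$) and passing to $X=x+d$, $Y=y+d$. This leads to the short system $X(X^2+XY+Y^2)=b+c$, $(X+Y)(X^2+XY+Y^2)=a+b$, which is solved by a direct case split and a division trick. The paper's argument is constructive (it produces the inverse explicitly) and noticeably shorter for this particular $f$. Your argument, by contrast, uses Lemma~\ref{lemma-5}(ii) in the spirit of Theorem~\ref{th-5}, but avoids that theorem's resultant and exponential-sum machinery entirely by exploiting the factorization through $v=u^2+bu$; it is self-contained and arguably more transparent about \emph{why} the obstruction is exactly $\mathrm{Tr}_m(1)=1$. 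Both proofs ultimately hinge on the same algebraic miracle---$X^2+XY+Y^2$ (equivalently $s=a^2+ab+b^2$) having no nontrivial zeros when $m$ is odd---but they reach it from opposite ends.
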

\begin{proof}
      It suffices to show that the equation system
            \begin{subequations}            \renewcommand\theequation{\theparentequation.\arabic{equation}}
	      \label{i-eq1}
	   \begin{empheq}[left={\empheqlbrace\,}]{align}
		&x^3+yz^2+y^2z=a \label{i-num1} \\
        &y^3+x^2z+xz^2=b \label{i-num2} \\
        &z^3+xy^2+x^2y=c\label{i-num3}
	\end{empheq}
            \end{subequations}
has exactly one solution in $\gf_{2^m}^3$ for all $a,b,c\in\gf_{2^m}$.

Computing the summation of  the equation system (\ref{i-eq1}), we can get
        $$
        (x+y+z)^3=a+b+c.
        $$
         Let $d=(a+b+c)^{1/3}$. Then we have $x+y+z=d$. Plugging it into Eq. (\ref{i-num1}) and Eq. (\ref{i-num3}), we obtain
        $$
        \begin{cases}
            (x+y)^3+(y+d)^3=a+d^3,\\
            (x+d)^3+(y+d)^3=c+d^3.
        \end{cases}
        $$
        Let $X=x+d,\ Y=y+d$. Then the equation system above is equivalent to
        $$
        \begin{cases}
            (X+Y)^3+Y^3=a+d^3,\\
            X^3+Y^3=c+d^3.
        \end{cases}
        $$
        By simplifying, we get
            \begin{subequations}
            \renewcommand\theequation{\theparentequation.\arabic{equation}}
	      \label{i-eq2}
	   \begin{empheq}[left={\empheqlbrace\,}]{align}
            &X(X^2+XY+Y^2)=b+c,\label{i-num4}\\
            &(X+Y)(X^2+XY+Y^2)=a+b.\label{i-num5}
            \end{empheq}
            \end{subequations}

        Next, we investigate the solutions of the equation system \eqref{i-eq2} in the case of $b+c=0$ and $b+c\neq0$.

       {\textbf{Case 1.} }  If $b=c$, we get $X^2+XY+Y^2=X^2\left( 1+\frac{Y}{X} + \frac{Y^2}{X^2} \right)=0$ by Eq. \eqref{i-num4} when $X\neq0$, which implies $\frac{Y}{X}\in\gf_{2^2}$ and contradicts the fact that $m$ is odd. Thus, we have $X=0$. Plugging it into Eq. \eqref{i-num5}, we have $Y^3=a+b$. Furthermore, $(d,d+(a+b)^{1/3},d+(a+b)^{1/3})$
        is the unique solution of the equation system \eqref{i-eq1}.

       \textbf{Case 2.}  If $b\neq c$, then $X\neq 0$. After dividing the left part of Eq. (\ref{i-num5}) by that of Eq. (\ref{i-num4}) and simplifying, we get
        $$
        \frac{Y}{X}=\frac{a+c}{b+c}.
        $$
        Plugging it into Eq. (\ref{i-num5}) implies
        $$
        \left[1+\left(\frac{a+c}{b+c}\right)^3\right]X^3=a+b.
        $$
       Finally, the only solution of the equation system \eqref{i-eq1} can be solved. That is,
        $$
        \begin{cases}
            x =d+\left[\frac{(a+b)(b+c)^3}{(a+c)^3+(b+c)^3}\right]^{1/3},\\
            y =d+\left(\frac{a+c}{b+c}\right)\left[\frac{(a+b)(b+c)^3}{(a+c)^3+(b+c)^3}\right]^{1/3}, \\
            z =d+ \left(\frac{a+b}{b+c}\right)\left[\frac{(a+b)(b+c)^3}{(a+c)^3+(b+c)^3}\right]^{1/3}\ .
        \end{cases}
        $$

        To sum up, $F(x,y,z)$ is a   permutation of $\gf_{2^m}^3$.
\end{proof}

\begin{Th}
    \label{th2}
     Let $m$ be odd and $F(x,y,z)=(f(x,y,z),f(y,z,x),f(z,x,y))$, where  $$f(x,y,z)=x^3+y^3+x^2y+x^2z+yz^2.$$ Then $F$ is a  permutation of $\gf_{2^m}^3$.
\end{Th}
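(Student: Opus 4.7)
The plan is to invoke Lemma \ref{lemma-5}(i) and show, for every $(a,b,c)\in\gf_{2^m}^{3}$, that $F(x,y,z)=(a,b,c)$ has a unique solution. The key algebraic observation is the factorization
\[
 f(x,y,z)=x^{2}(x+y+z)+y(y+z)^{2}=s^{3}+(x+z)(y+z)^{2},\qquad s:=x+y+z,
\]
valid in characteristic two. Introducing the auxiliary variables $w_{1}=x+z$, $w_{2}=x+y$, $w_{3}=y+z$ (so that $w_{1}+w_{2}+w_{3}=0$), the map $(x,y,z)\mapsto(s,w_{1},w_{2})$ is a bijection on $\gf_{2^{m}}^{3}$, and the original system is equivalent to
\[
 w_{1}w_{3}^{2}=A,\quad w_{1}^{2}w_{2}=B,\quad w_{2}^{2}w_{3}=C,\qquad w_{3}=w_{1}+w_{2},
\]
with $A=a+s^{3}$, $B=b+s^{3}$, $C=c+s^{3}$.

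I would next eliminate the $w_{i}$'s. Assuming $w_{1}\neq 0$ and setting $w_{2}=B/w_{1}^{2}$ and $u=w_{1}^{3}$, the first and third equations of the system reduce to the pair of quadratics
\[
 u^{2}+Au+B^{2}=0\quad\text{and}\quad Cu^{2}+B^{2}u+B^{3}=0.
\]
Eliminating $u^{2}$ between them gives $(AC+B^{2})\,u=B^{2}(B+C)$, and substituting $u=B^{2}(B+C)/(AC+B^{2})$ back into the first quadratic produces, after a short simplification, the scalar compatibility condition $A^{2}C+AB^{2}+ABC+BC^{2}=0$. Expanded in powers of $s^{3}$, the $s^{6}$ and $s^{9}$ contributions cancel in characteristic two, and the relation collapses to the linear equation
\[
 (a^{2}+ab+ac+b^{2}+bc+c^{2})\,s^{3}=a^{2}c+ab^{2}+abc+bc^{2}.
\]
I would then show that the coefficient $P(a,b,c)=(a+b+c)^{2}+(ab+bc+ca)$ vanishes \emph{only} when $a=b=c$: writing the cubic $X^{3}+tX^{2}+t^{2}X+n$ as $(X+t)^{3}+(n+t^{3})$, three roots in $\gf_{2^{m}}$ of such a cubic must coincide when $m$ is odd (since $\gf_{4}\not\subset\gf_{2^{m}}$ and cubing is bijective). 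Hence for $(a,b,c)\neq(a,a,a)$, $s^{3}$ is uniquely determined, and so therefore is $s$; then $A,B,C$, then $u$, then $w_{1}$ (by the unique cube root), then $w_{2}=B/w_{1}^{2}$, $w_{3}=w_{1}+w_{2}$, and finally $(x,y,z)=(s+w_{3},\,s+w_{1},\,s+w_{2})$ are each forced. Substitution using the factored form of $f$ verifies that this candidate indeed satisfies the original system.

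What remains is the case analysis for the degenerate configurations. The symmetric case $a=b=c$ forces $A=B=C$, and the three ratios $w_{1}^{2}w_{2}/w_{1}w_{3}^{2}$, etc., equal $1$, which combined with $w_{1}+w_{2}+w_{3}=0$ yields $w_{1}=w_{2}=w_{3}=0$; the unique solution is $x=y=z=a^{1/3}$. The vanishing of the denominator $AC+B^{2}$ appearing in the formula for $u$ is shown, by a short computation, to occur only when two of $\{a,b,c\}$ coincide; in each such pair-equality case one of $A,B,C$ is zero, forcing a specific $w_{i}=0$, and the remaining two variables are uniquely recovered and easily verified. The main obstacle I anticipate is not conceptual but organizational: distinguishing every sub-case (for instance $B=0$ under $a=b$ versus $B=0$ under $b=c$) and confirming each candidate by plugging back into the compact factored form $f=s^{3}+w_{1}w_{3}^{2}$; once this bookkeeping is in place, uniqueness throughout $\gf_{2^{m}}^{3}$ follows and $F$ is a permutation.
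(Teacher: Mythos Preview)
Your approach is correct and closely parallels the paper's, though organized differently. Both hinge on the same linear change of variables: the paper sets $X=x+z$, $Y=y+z$ (your $w_1,w_3$), takes pairwise sums of the three equations---which cancels the $s^3$ term since $f(x,y,z)+f(y,z,x)=A+B=a+b$---and obtains directly
\[
X(X^{2}+XY+Y^{2})=a+b,\qquad (X+Y)(X^{2}+XY+Y^{2})=b+c,
\]
so that $Y/X=(a+c)/(a+b)$ determines $X,Y$, after which the summed relation $x^{2}y+y^{2}z+z^{2}x=a+b+c$ pins down $z$. You instead keep $s$ as an unknown, derive the compatibility $P(a,b,c)\,s^{3}=Q(a,b,c)$ by elimination, and recover the $w_i$ afterwards. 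Your argument that $P\neq 0$ unless $a=b=c$---via the observation that the cubic with roots $a,b,c$ becomes $(X+e_1)^{3}=e_1^{3}+e_3$ and hence has a unique root in $\gf_{2^m}$---is a clean touch the paper does not need, since it never forms $P$ explicitly. The two routes are really the same elimination performed in opposite orders; the paper's ordering is slightly shorter because the pairwise sums kill $s$ immediately and avoid the compatibility detour, while your factorization $f=s^{3}+w_{1}w_{3}^{2}$ makes the underlying structure more transparent.

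Two small remarks on the bookkeeping. First, with the determined value $s^{3}=Q/P$ one computes $AC+B^{2}=(a+b)(b+c)^{3}/P$, so its vanishing is exactly $a=b$ or $b=c$; the case $a=c\neq b$ does \emph{not} force it to vanish, so your phrase ``two of $\{a,b,c\}$ coincide'' is sufficient but not an equivalence---be precise here when you write up the cases. Second, for uniqueness you do not actually need the back-substitution: the two quadratics $u^{2}+Au+B^{2}$ and $Cu^{2}+B^{2}u+B^{3}$ are proportional only when $A=B=C$, i.e.\ $a=b=c$, so outside that case they share at most one root, and the remaining variables are forced; since injectivity of a self-map on a finite set implies bijectivity, the verification step can be omitted.
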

\begin{proof}
   It suffices to show that for all $a,b,c\in\gf_{2^m}$, the equation system
        \begin{subequations}  \renewcommand\theequation{\theparentequation.\arabic{equation}}
	      \label{ii-eq1}
	   \begin{empheq}[left={\empheqlbrace\,}]{align}
    &x^3+y^3+x^2y+x^2z+yz^2=a\label{ii-num1}\\
    &y^3+z^3+y^2z+y^2x+zx^2=b\label{ii-num2}\\
    &z^3+x^3+z^2x+z^2y+xy^2=c\label{ii-num3}
        \end{empheq}
        \end{subequations}
has exactly one solution in $\gf_{2^m}^3$.

Computing the summation of Eq. (\ref{ii-num1}) and Eq. (\ref{ii-num2}), and that of Eq. (\ref{ii-num2}) and Eq. (\ref{ii-num3}) resp. , we get
        \begin{equation}
            \begin{cases}
            \label{ii-eq2}
            (x+y)^3+(y+z)^3=a+b,\\
            (x+z)^3+(y+z)^3=b+c.
        \end{cases}
        \end{equation}
And computing the summation of Eq.  (\ref{ii-num1}), Eq. (\ref{ii-num2}) and Eq. (\ref{ii-num3}) , we get
        \begin{equation}
        \label{ii-eq3}
            x^2y+y^2z+z^2x=a+b+c.
        \end{equation}
Let $X=x+z$ and $Y=y+z$. The equation system (\ref{ii-eq2}) is equivalent to
        \begin{subequations}        \renewcommand\theequation{\theparentequation.\arabic{equation}}
	      \label{ii-eq4}
	   \begin{empheq}[left={\empheqlbrace\,}]{align}
            &(X+Y)^3+Y^3=a+b,\label{ii-num4}\\
            &X^3+Y^3=b+c.\label{ii-num5}
            \end{empheq}
        \end{subequations}
        Furthermore, after simplifying the equation system (\ref{ii-num4}) and (\ref{ii-num5}), we get
        \begin{equation}
        \label{eqeq1}
        \begin{cases}
            X(X^2+XY+Y^2)=a+b,\\
            (X+Y)(X^2+XY+Y^2)=b+c.
        \end{cases}
        \end{equation}

        \textbf{Case 1.} If $a=b$, then $X=0$ since $X^2+XY+Y^2=0$ if and only if  $X=Y=0$. SO we have $Y^3=b+c$ from Eq. (\ref{ii-num5}).
         Moreover, by Eq. (\ref{ii-eq2}), we get the unique solution that
        $$
        \begin{cases}
            x=b^{1/3}+(b+c)^{1/3},\\
            y=b^{1/3},\\
            z=b^{1/3}+(b+c)^{1/3}.
        \end{cases}
        $$

        \textbf{Case 2.} If $a\neq b$, then $X\neq 0$ and $X^2+XY+Y^2\neq 0$.

        The equation system \eqref{eqeq1} is equivalent to
        \begin{equation}
        \label{ii-eq4}
            Y=\frac{a+c}{a+b}X.
        \end{equation}
       Plugging Eq. (\ref{ii-eq4}) into Eq. (\ref{ii-num5}), we have
       $$
       \left[1+\left(\frac{a+c}{a+b}\right)^3\right]X^3=b+c.
       $$

 If $b+c=0$, we know that $X=Y$ (i.e., $x=y$) from Eq. (\ref{ii-eq4}). And Eq. (\ref{ii-num4}) and Eq. (\ref{ii-num3}) are equivalent to
        $$
        \begin{cases}
            X^3=Y^3=a+b,\\
            z=c^{1/3},
        \end{cases}
        $$
        respectively.   Further, we can solve the equation system \eqref{ii-eq1} and obtain the following solutions.
        $$
        \begin{cases}
          x=(a+b)^{1/3}+c^{1/3},\\
          y=(a+b)^{1/3}+c^{1/3},\\
          z=c^{1/3}.
        \end{cases}
        $$

 If $b+c\neq 0$, we can straightly get the solutions of $X$ and $Y$,
        $$
        \begin{cases}
            X=\frac{(a+b)(b+c)^{1/3}}{[(a+b)^3+(a+c)^3]^{1/3}}\ , \\
            Y=\frac{a+c}{a+b} X\ .
        \end{cases}
        $$

        It is apparent that $x+y= X+Y$. The left side of Eq. (\ref{ii-eq3}) is equivalent to
        $$
        \begin{aligned}
           &x^2(X+Y+x)+xz^2+(X+Y+x)^2z\\
           &=(x+z)^3+(X+Y)x^2+z^3+(X+Y)^2z\\
           &=X^3+(z+X+Y)^3+(X+Y)^3+(X+Y)(x+z)^2\\
            &=a+b+c.
        \end{aligned}
        $$
        That is,
        $$
        (z+X+Y)^3+(X+Y)^3+X^2Y=a+b+c.
        $$
        Thus the value of $z$ can be easily solved, and so do $x,y$. The solutions are as follows.
        $$
        \begin{cases}
            x=\left[\frac{b(a+c)^3+(a+b)^2(c^2+ab)}{(a+c)^3+(b+c)^3}\right]^{1/3}+Y\ , \\
            y=\left[\frac{b(a+c)^3+(a+b)^2(c^2+ab)}{(a+c)^3+(b+c)^3}\right]^{1/3}+X\ ,\\
            z=\left[\frac{b(a+c)^3+(a+b)^2(c^2+ab)}{(a+c)^3+(b+c)^3}\right]^{1/3}+X+Y\ .
        \end{cases}
        $$
        To sum up, the equation system \eqref{ii-eq1} has only one solution.
\end{proof}

\begin{Th}
    \label{th3}
    Let  $m$ be odd and $F(x,y,z)=(f(x,y,z),f(y,z,x),f(z,x,y))$ be a function from $\gf_q^3$ to itself,  where  $$f(x,y,z)=x^3+x^2y+xy^2+x^2z+xz^2.$$ Then $F$ is a permutation of $\gf_{2^m}^3$.
\end{Th}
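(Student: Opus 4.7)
The plan is to apply Lemma \ref{lemma-5}(i): for every $(a,b,c)\in\gf_{2^m}^3$, show that the system $F(x,y,z)=(a,b,c)$ has exactly one solution. The first and most important step is to exploit the factorization $f(x,y,z)=x(x^2+xy+y^2+xz+z^2)$ together with the characteristic-$2$ identity $x^2+y^2+z^2=(x+y+z)^2$. Writing $\sigma:=x+y+z$, the inner factor simplifies to $\sigma^2+x\sigma+x^2$, and by cyclic symmetry
\[
F(x,y,z)=\bigl(\tau_\sigma(x),\,\tau_\sigma(y),\,\tau_\sigma(z)\bigr),\qquad \tau_\sigma(u):=u^3+u^2\sigma+u\sigma^2.
\]
This reveals a highly structured form in which each coordinate of $F$ depends only on the corresponding variable and on the common sum $\sigma$.

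The next observation is that $\tau_\sigma$ is a bijection of $\gf_{2^m}$ for every fixed $\sigma$. Indeed, $(u+\sigma)^3=u^3+u^2\sigma+u\sigma^2+\sigma^3$ in characteristic $2$, so $\tau_\sigma(u)=a$ is equivalent to $(u+\sigma)^3=a+\sigma^3$, which has a unique solution because the cube map is a bijection on $\gf_{2^m}$ when $m$ is odd. Hence, for any hypothetical value of $\sigma$, the would-be preimage of $(a,b,c)$ is $x=\sigma+\alpha$, $y=\sigma+\beta$, $z=\sigma+\gamma$, where $\alpha=(a+\sigma^3)^{1/3}$, $\beta=(b+\sigma^3)^{1/3}$, $\gamma=(c+\sigma^3)^{1/3}$, and the self-consistency $x+y+z=\sigma$ reduces (using $3\sigma=\sigma$) to $\alpha+\beta+\gamma=0$. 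Cubing $\gamma=\alpha+\beta$ yields $\alpha\beta\gamma=S+t$ where $S:=a+b+c$ and $t:=\sigma^3$, and combining with $(\alpha\beta\gamma)^3=(a+t)(b+t)(c+t)$ gives $(S+t)^3=(a+t)(b+t)(c+t)$. A straightforward expansion in characteristic $2$ collapses this cubic in $t$ to the linear equation
\[
t(S^2+P)=S^3+abc,\qquad P:=ab+bc+ca.
\]

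The main obstacle is the case analysis on $S^2+P$ together with verifying that cubing does not introduce a spurious solution. First I would prove that, for $m$ odd, $S^2+P=a^2+b^2+c^2+ab+bc+ca=0$ if and only if $a=b=c$: viewed as a quadratic in $a$ with $b\neq c$, Lemma \ref{lemma-4} requires $\tr_m\!\bigl((b^2+bc+c^2)/(b+c)^2\bigr)=\tr_m(1+u+u^2)=\tr_m(1)=1\neq 0$ (where $u=b/(b+c)$), a contradiction; the case $b=c$ forces $a=b$ directly. The degenerate case $a=b=c$ is then handled separately: bijectivity of $\tau_\sigma$ forces $x=y=z$ and hence $\sigma=x$ and $x=a^{1/3}$, yielding the unique solution $(a^{1/3},a^{1/3},a^{1/3})$. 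When $S^2+P\neq 0$ we obtain a unique $t$ and hence a unique $\sigma=t^{1/3}$, giving a unique candidate $(x,y,z)$. To confirm that $\alpha+\beta+\gamma=0$ genuinely holds for this $\sigma$ (rather than some extraneous branch introduced by cubing), I would apply the characteristic-$2$ identity $\alpha^3+\beta^3+\gamma^3=s^3+sp'+q'$ with $s=\alpha+\beta+\gamma$, $p'=\alpha\beta+\beta\gamma+\gamma\alpha$, $q'=\alpha\beta\gamma$: since $q'=S+t$ and the left-hand side also equals $S+t$, we obtain $s(s^2+p')=0$, and the alternative $s\neq 0$, $s^2=p'$ forces (via the substitution $T=sU$ together with the bijectivity of $U\mapsto U^3+U^2+U$ on $\gf_{2^m}$, which follows from $(U+1)^3+1=U^3+U^2+U$) the three cube roots to coincide, whence $a=b=c$---excluded. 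Thus $s=0$, the candidate is consistent, and $F$ is a permutation.
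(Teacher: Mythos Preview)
Your argument is correct and takes a genuinely different route from the paper. The paper proceeds as in Theorem~\ref{th2}: it sums pairs of the three equations to obtain $(x+z)^3+(y+z)^3=a+b$ and $(x+y)^3+(y+z)^3=a+c$, substitutes $X=x+z$, $Y=y+z$, and then runs a case analysis on whether $a=c$, $a=b$ to produce explicit closed-form solutions for $(x,y,z)$. Your key observation---that $f(x,y,z)=x\bigl((x+y+z)^2+x(x+y+z)+x^2\bigr)=(x+\sigma)^3+\sigma^3$ with $\sigma=x+y+z$, and hence $F(x,y,z)=(\tau_\sigma(x),\tau_\sigma(y),\tau_\sigma(z))$ with $\tau_\sigma(u)=(u+\sigma)^3+\sigma^3$---is not in the paper and gives a structural explanation for why $F$ is a permutation: each coordinate is, up to a common shift, the cube map. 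Your reduction to the single linear equation $t(S^2+P)=S^3+abc$ in $t=\sigma^3$ is cleaner than the paper's multi-case bookkeeping, and the exclusion of the spurious branch $s^2=p'$ via the bijectivity of $U\mapsto(U+1)^3+1$ is an elegant touch. One point worth making explicit in the write-up: in the verification step, the assertion $q'=\alpha\beta\gamma=S+t$ holds not because of the (yet unproved) relation $\alpha+\beta+\gamma=0$, but because $q'^3=(a+t)(b+t)(c+t)=(S+t)^3$ by the defining equation for $t$, together with the injectivity of cubing; you clearly have all the ingredients, but stating this prevents any appearance of circularity. What the paper's approach buys, in exchange for more case-work, is fully explicit formulas for the inverse; what yours buys is a transparent, essentially coordinate-free reason for bijectivity.
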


\begin{proof}
        It suffices to show that the equation system
        \begin{subequations}
        \renewcommand\theequation{\theparentequation.\arabic{equation}}
	      \label{iii-eq1}
	   \begin{empheq}[left={\empheqlbrace\,}]{align}
        &x^{3}+x^2y+xy^2+x^2z+xz^2=a
        \label{iii-num1}\\
        &y^{3}+y^2z+yz^2+y^2x+yx^2=b
        \label{iii-num2}\\
        &z^{3}+z^2x+zx^2+z^2y+zy^2=c
        \label{iii-num3}
        \end{empheq}
        \end{subequations}
        has a unique solution in $\gf_{2^m}^3$ for all $a,b,c\in\gf_{2^m}$. Computing the summation of the equation system (\ref{iii-num1}), (\ref{iii-num2}) and (\ref{iii-num3}), we get
        \begin{equation}
        \label{iii-eq2}
            x^3+y^3+z^3=a+b+c.
        \end{equation}

     And computing the left part of Eq. (\ref{iii-num1}) and (\ref{iii-num2}), and that of Eq. (\ref{iii-num2}) and (\ref{iii-num3}) resp. , we obtain
        $$
        \begin{cases}
            (x+z)^{3}+(y+z)^{3}=a+b,\\
            (x+y)^{3}+(y+z)^{3}=a+c.
        \end{cases}
        $$
        Let $X=x+z$, $Y=y+z$. The equation system above is equivalent to
        \begin{subequations}  \renewcommand\theequation{\theparentequation.\arabic{equation}}  \label{iii-eq3}
	   \begin{empheq}[left={\empheqlbrace\,}]{align}
    &(X+Y)^{3}+Y^{3}=X(X^2+XY+Y^2)=a+c, \label{iii-num4}\\
    &X^{3}+Y^{3}=(X+Y)(X^2+XY+Y^2)=a+b.
        \label{iii-num5}
        \end{empheq}
        \end{subequations}

        Similarly, we investigate the solutions of the equation system\eqref{iii-eq3} when $a+c=0$ or not.

        \textbf{Case 1.} If $a=c$, we have $X=0$, i.e., $x=z$ and $Y^3=a+b$. Plugging them into Eq. (\ref{iii-eq2}), the values of $x,y,z$ can be obtained as follows.
        $$
        \begin{cases}
            x=b^{1/3}+(a+b)^{1/3},\\
            y=b^{1/3},\\
            z=b^{1/3}+(a+b)^{1/3}.
        \end{cases}
        $$

      \textbf{Case 2.} If $a\neq c$, 
 we have $X\neq 0$. The method we use here is exactly the same as the one used in case 2 of Theorem \ref{th2}. We omit it here and post the final solution directly.
     \begin{itemize}
    \item  When $a=b$, the solution of the equation system  \eqref{iii-eq1} is
          $$
            \begin{cases}
            x=(a+c)^{1/3}+c^{1/3},\\
            y=(a+c)^{1/3}+c^{1/3},\\
            z=c^{1/3}.
            \end{cases}
          $$
          \item When $a\neq b$, the solution is
          $$
      \begin{cases}
        x=A+z,\\
        y=B+z,\\
        z=\left[\frac{a^4+b^4}{(a+c)^3+(b+c)^3}+c\right]^{1/3}+\frac{(a+b)^{4/3}}{[(a+c)^3+(b+c)^3]^{1/3}}.
      \end{cases}
      $$
      where
      $$
      A=\frac{(a+c)(a+b)^{1/3}}{[(a+c)^3+(b+c)^3]^{1/3}}, B=\frac{b+c}{a+c} \frac{(a+c)(a+b)^{1/3}}{[(a+c)^3+(b+c)^3]^{1/3}}.
      $$
      \end{itemize}

\end{proof}

In the final of this section, we discuss the QM-equivalent relation between the newly-constructed permutations and the known ones.
Clearly, given a permutation of $\gf_{2^m}^3$, we can obtain a permutation polynomial of $\gf_{2^{3m}}$. Let $\omega\in \gf_{2^{3m}}$ satisfy that $\{1,\omega,\omega^2\}$ forms a basis of $\gf_{2^{3m}}$ over $\gf_{2^m}$. Then any $t\in \gf_{2^{3m}}$ can be expressed by $x,y,z \in \gf_{2^m}$ as $t=x+y\omega+z\omega^2$. Raising the equation to its $2^m$\textit{-}th and $2^{2m}$\textit{-}th power,  we get an equation system. Based on the basic knowledge of linear algebra, $x,y,z$ can be expressed by $t, t^q, t^{q^2}$, denoted as $x(t),y(t),z(t)$. Therefore, given a permutation $F(x,y,z)=(f_1(x,y,z),f_2(x,y,z),f_3(x,y,z))$, the corresponding permutation polynomial is $F'(t)=f_1(x(t),y(t),z(t))+f_2(x(t),y(t),z(t))\omega+f_3(x(t),y(t),z(t))\omega^2$.

In the literature,  many researchers constructed permutation trinomials over $\gf_{q^3}$. Results of the recent survey are presented in Table \ref{label-2}.
\begin{table}[h]
    \caption{Known Classes of Permutation Polynomials on $\gf_{2^{3m}}$}
    \label{label-2}
    \centering
    \begin{tabular}{cccccc}
    \toprule
    $i$ & $f_i(x)\in \gf_{2^{3m}}[x]$ & Ref.
    &$i$ & $f_i(x)\in \gf_{2^{3m}}[x]$ & Ref.\\
    \hline
    $1$ & $v^{-1}x^{q^2+q+2}$ &\cite{tu2014several} & $8$ & $ax+L(x^s),L(x)=x+bx^q$&\cite{zhenglijing2023two}\\
    $2$ & $vx+x^{q+1}+x^{q^2+1}$ &\cite{tu2014several} &$9$ & $ax+L(x^s),L(x)=x+bx^{q^2}$&\cite{zhenglijing2023two}\\
    $3$ & $cx+\tr_{q^{2n+1}/q}(x^a)$ & \cite{li2018permutation}&  $10$ &$x+x^{q^2-q+1}+x^{q^2+q-1}$ &\cite{wang2018six} \\
    $4$ & $x^{q+1}+(x+x^q)^{2q^2}$ &\cite{gong2016permutation} &
    $11$ & $x+x^{q^2}+x^{q^2+q-1}$&\cite{wang2018six}\\
    $5$ &  $x^d+L(x^s),L(x)=ax+bx^q$ &\cite{pang2021permutation}&
    $12$ & $x+x^{q^2+q-1}+x^{q^3-q^2+q}$ & \cite{li2018permutation}\\
    $6$ & $x^d+L(x^s),L(x)=x+Ax^q$ &\cite{gupta2022new}&$13$ &$x+x^{q^2}+x^{q^3-q^2+1}$ & \cite{li2018permutation}
     \\
    $7$ & $x^d+L(x^s),L(x)=x+Ax^{q^2}$ &\cite{gupta2022new}&
    $14$ &$x^{q^2-q+1} + Ax^{q^2}+ bx$ & \cite{bartoli2023permutation}\\

    \bottomrule
    \end{tabular}
\end{table}

It is trivial that the corresponding permutation polynomials of $\gf_{2^{3m}}$ of the newly constructed permutations in this section are QM\textit{-}inequivalent to the known permutation trinomials since ours have more terms.  In fact, in most cases, the simpler structure of permutation on $\gf_{2^m}^3$ has, the more complex its corresponding permutation is.  In the following, we will give an explicit example.

\begin{example}
     For the permutation $F(x,y,z)=(x^3+yz^2+y^2z,y^3+zx^2+z^2x,z^3+xy^2+x^2y)$ on $\gf_{2^3}^3$ , its corresponding PP is $F'(x)=\omega^{31}x^{12} + \omega^{213}x^{10} + \omega^{104}x^9 + \omega^{320}x^8 + \omega^{487}x^6 + \omega^{240}x^5 + \omega^{238}x^4 + \omega^{435}x^3$. By Remark \ref{rem}, $F'(x)$ is QM-inequivalent to the known permutation trinomials over $\gf_{2^9}$.
\end{example}

\section{Conclusion and further works}
\label{ch4}
 In this paper, we mainly focused on constructing several new classes of $3$-homogenous rotatable permutations from $\gf_q^3$ to itself. In our proofs, the resultant of polynomials and some skills of exponential sums were used.
 Clearly, many directions could be further investigated, and some perspectives are listed here.
 First, from the experimental results by MAGMA, we can find that there are still some $3$-homogenous rotatable permutation instances on $\gf_{2^m}^3$ where $m=3,5,7$. It is interesting to generalize them into infinite classes. Second,
 due to the diversity of $d$-homogeneous polynomials, there surely are plenty of possibilities to enrich PPs of the form $f(x)=x^r h(x^{q-1})$ over $\gf_{q^n}$ using the method of \cite{qu2023}. Therefore,  constructing $d$-homogeneous permutations for more structures and higher powers deserves to be explored.
Finally, from the point of properties of permutations, the cryptographic properties of these construction results are unexplored in this paper. We believe that,  if permutations with good properties can be discovered from them, they will be able to broaden the applications of multivariate permutations.

\bibliographystyle{plain}
\bibliography{ref}

\begin{thebibliography}{10}

\bibitem{bartoli2020permutation}
Daniele Bartoli.
\newblock Permutation trinomials over $\gf_{q^3}$.
\newblock {\em Finite Fields and Their Applications}, 61:101597, 2020.

\bibitem{bartoli2023permutation}
Daniele Bartoli and Francesco Ghiandoni.
\newblock On permutation trinomials of the type {$ X^{q^2-q+1}+ AX^{q^2}+BX $
  over $\gf_{q^3}$}.
\newblock {\em arXiv preprint arXiv:2306.09972}, 2023.

\bibitem{beierle2022further}
Christof Beierle, Claude Carlet, Gregor Leander, and L{\'e}o Perrin.
\newblock A further study of quadratic {APN} permutations in dimension nine.
\newblock {\em Finite Fields and Their Applications}, 81:102049, 2022.

\bibitem{beierle2021new}
Christof Beierle and Gregor Leander.
\newblock New instances of quadratic {APN} functions.
\newblock {\em IEEE Transactions on Information Theory}, 68(1):670--678, 2021.

\bibitem{berlekamp1967solution}
Elwyn~R Berlekamp, Howard Rumsey, and Gustave Solomon.
\newblock On the solution of algebraic equations over finite fields.
\newblock {\em Information and Control}, 10(6):553--564, 1967.

\bibitem{boura2018boomerang}
Christina Boura and Anne Canteaut.
\newblock On the boomerang uniformity of cryptographic {S}-boxes.
\newblock {\em IACR Transactions on Symmetric Cryptology}, pages 290--310,
  2018.

\bibitem{browning2010apn}
KA~Browning, JF~Dillon, MT~McQuistan, and AJ~Wolfe.
\newblock An {APN} permutation in dimension six.
\newblock {\em Finite Fields: Theory and Applications}, 518:33--42, 2010.

\bibitem{canteaut2017generalisation}
Anne Canteaut, S{\'e}bastien Duval, and L{\'e}o Perrin.
\newblock A generalisation of {Dillon's} {APN} permutation with the best known
  differential and nonlinear properties for all fields of size $2^{4k+2}$.
\newblock {\em IEEE Transactions on Information Theory}, 63(11):7575--7591,
  2017.

\bibitem{carlet1998codes}
Claude Carlet, Pascale Charpin, and Victor Zinoviev.
\newblock Codes, bent functions and permutations suitable for {DES}-like
  cryptosystems.
\newblock {\em Designs, Codes and Cryptography}, 15:125--156, 1998.

\bibitem{cid2018boomerang}
Carlos Cid, Tao Huang, Thomas Peyrin, Yu~Sasaki, and Ling Song.
\newblock Boomerang connectivity table: a new cryptanalysis tool.
\newblock In {\em Advances in Cryptology--EUROCRYPT 2018: 37th Annual
  International Conference on the Theory and Applications of Cryptographic
  Techniques, Tel Aviv, Israel, April 29-May 3, 2018 Proceedings, Part II 37},
  pages 683--714. Springer, 2018.

\bibitem{ding2013cyclic}
Cunsheng Ding.
\newblock Cyclic codes from some monomials and trinomials.
\newblock {\em SIAM Journal on Discrete Mathematics}, 27(4):1977--1994, 2013.

\bibitem{ding2006family}
Cunsheng Ding and Jin Yuan.
\newblock A family of skew {Hadamard} difference sets.
\newblock {\em Journal of Combinatorial Theory, Series A}, 113(7):1526--1535,
  2006.

\bibitem{gong2016permutation}
Xin Gong, Guangpu Gao, and Wenfen Liu.
\newblock On permutation polynomials of the form {$x^{1+ 2^k}+L(x)$}.
\newblock {\em International Journal of Computer Mathematics},
  93(10):1715--1722, 2016.

\bibitem{gupta2022new}
Rohit Gupta, Pooja Gahlyan, and RK~Sharma.
\newblock New classes of permutation trinomials over $\gf_{q^3}$.
\newblock {\em Finite Fields and Their Applications}, 84:102110, 2022.

\bibitem{hou2015permutation}
Xiang-dong Hou.
\newblock Permutation polynomials over finite fields—a survey of recent
  advances.
\newblock {\em Finite Fields and Their Applications}, 32:82--119, 2015.

\bibitem{kim2022permutation}
Kwang~Ho Kim, Sihem Mesnager, Jong~Hyok Choe, Dok~Nam Lee, Sengsan Lee, and
  Myong~Chol Jo.
\newblock On permutation quadrinomials with boomerang uniformity 4 and the
  best-known nonlinearity.
\newblock {\em Designs, Codes and Cryptography}, 90(6):1437--1461, 2022.

\bibitem{kim2005new}
Sang-Hyo Kim, Jong-Seon No, Habong Chung, and Tor Helleseth.
\newblock New cyclic relative difference sets constructed from $d$-homogeneous
  functions with difference-balanced property.
\newblock {\em IEEE Transactions on Information Theory}, 51(3):1155--1163,
  2005.

\bibitem{klapper1995d}
Andrew~M Klapper.
\newblock $d$-form sequences: Families of sequences with low correlation values
  and large linear spans.
\newblock {\em IEEE Transactions on Information Theory}, 41(2):423--431, 1995.

\bibitem{laigle2007permutation}
Yann Laigle-Chapuy.
\newblock Permutation polynomials and applications to coding theory.
\newblock {\em Finite Fields and Their Applications}, 13(1):58--70, 2007.

\bibitem{li2022two}
Kangquan Li and Nikolay Kaleyski.
\newblock Two new infinite families of apn functions in trivariate form.
\newblock {\em IEEE Transactions on Information Theory}, 70(2):1436--1452,
  2024.

\bibitem{li2021cryptographically}
Kangquan Li, Chunlei Li, Tor Helleseth, and Longjiang Qu.
\newblock Cryptographically strong permutations from the butterfly structure.
\newblock {\em Designs, Codes and Cryptography}, 89:737--761, 2021.

\bibitem{li2023further}
Kangquan Li, Chunlei Li, Tor Helleseth, and Longjiang Qu.
\newblock Further investigations on permutation based constructions of bent
  functions.
\newblock {\em Journal of Combinatorial Theory, Series A}, 199:105779, 2023.

\bibitem{li2018permutation}
Kangquan Li, Longjiang Qu, Xi~Chen, and Chao Li.
\newblock Permutation polynomials of the form $cx+\mathrm{Tr}_{q^{l}/q}(x^a)$
  and permutation trinomials over finite fields with even characteristic.
\newblock {\em Cryptography and Communications}, 10:531--554, 2018.

\bibitem{li2019new}
Kangquan Li, Longjiang Qu, Bing Sun, and Chao Li.
\newblock New results about the boomerang uniformity of permutation
  polynomials.
\newblock {\em IEEE Transactions on Information Theory}, 65(11):7542--7553,
  2019.

\bibitem{li20204}
Nian Li, Zhao Hu, Maosheng Xiong, and Xiangyong Zeng.
\newblock 4-uniform {BCT} permutations from generalized butterfly structure.
\newblock arXiv, cs.IT, 2001.00464, 2020.

\bibitem{li2021permutation}
Nian Li, Maosheng Xiong, and Xiangyong Zeng.
\newblock On permutation quadrinomials and 4-uniform {BCT}.
\newblock {\em IEEE Transactions on Information Theory}, 67(7):4845--4855,
  2021.

\bibitem{li2019survey}
Nian Li and Xiangyong Zeng.
\newblock A survey on the applications of {Niho} exponents.
\newblock {\em Cryptography and Communications}, 11:509--548, 2019.

\bibitem{lidl1997finite}
Rudolf Lidl and Harald Niederreiter.
\newblock {\em Finite fields}.
\newblock Number~20. Cambridge university press, 1997.

\bibitem{no2004new}
Jong-Seon No.
\newblock New cyclic difference sets with singer parameters constructed from
  $d$-homogeneous functions.
\newblock {\em Designs, Codes and Cryptography}, 33:199--213, 2004.

\bibitem{pang2021permutation}
Tingting Pang, Yunge Xu, Nian Li, and Xiangyong Zeng.
\newblock Permutation polynomials of the form {$x^d+ L (x^s)$} over
  {$\gf_{q^3}$}.
\newblock {\em Finite Fields and Their Applications}, 76:101906, 2021.

\bibitem{perrin2016cryptanalysis}
L{\'e}o Perrin, Aleksei Udovenko, and Alex Biryukov.
\newblock Cryptanalysis of a theorem: {Decomposing} the only known solution to
  the big {APN} problem.
\newblock In {\em Annual International Cryptology Conference}, pages 93--122.
  Springer, 2016.

\bibitem{qu2023}
Longjiang Qu and Kangquan Li.
\newblock Constructing permutation polynomials over $\gf_{q^3}$ from bijections
  of $\mathrm{PG}(2,q)$.
\newblock {\em Finite Fields and Their Applications}, 95:102364, 2024.

\bibitem{qu2013constructing}
Longjiang Qu, Yin Tan, Chik~How Tan, and Chao Li.
\newblock Constructing differentially 4-uniform permutations over ${\gf
  }_{2^{2k}}$ via the switching method.
\newblock {\em IEEE Transactions on Information Theory}, 59(7):4675--4686,
  2013.

\bibitem{tu2014several}
Ziran Tu, Xiangyong Zeng, and Lei Hu.
\newblock Several classes of complete permutation polynomials.
\newblock {\em Finite Fields and Their Applications}, 25:182--193, 2014.

\bibitem{wang2019polynomials}
Qiang Wang.
\newblock Polynomials over finite fields: an index approach.
\newblock {\em Combinatorics and Finite Fields}, pages 319--348, 2019.

\bibitem{wang2018six}
Yanping Wang, Zhengbang Zha, and Weiguo Zhang.
\newblock Six new classes of permutation trinomials over $\gf_{q^3}$.
\newblock {\em Applicable Algebra in Engineering, Communication and Computing},
  29:479--499, 2018.

\bibitem{williams1975note}
Kenneth~S Williams.
\newblock Note on cubics over $\gf_{2^n}$ and $\gf_{3^n}$.
\newblock {\em Journal of Number Theory}, 7(4):361--365, 1975.

\bibitem{wu2017permutation}
Danyao Wu, Pingzhi Yuan, Cunsheng Ding, and Yuzhen Ma.
\newblock Permutation trinomials over {$\mathbb{F}_{2^m}$}.
\newblock {\em Finite Fields and Their Applications}, 46:38--56, 2017.

\bibitem{xie2023two}
Xi~Xie, Nian Li, Linjie Xu, Xiangyong Zeng, and Xiaohu Tang.
\newblock Two new classes of permutation trinomials over $\gf_{q^3}$ with odd
  characteristic.
\newblock {\em Discrete Mathematics}, 346(12):113607, 2023.

\bibitem{DBLP:journals/ffa/ZhaHZ19}
Zhengbang Zha, Lei Hu, and Zhizheng Zhang.
\newblock Permutation polynomials of the form
  $x+\gamma\text{Tr}_q^{q^n}(h(x))$.
\newblock {\em Finite Fields and Their Applications}, 60:101573, 2019.

\bibitem{zhenglijing2023two}
Lijing Zheng, Haibin Kan, Tongliang Zhang, Jie Peng, and Yanjun Li.
\newblock Two classes of permutation trinomials over $\gf_{q^3}$.
\newblock {\em Finite Fields and Their Applications}, 94:102354, 2024.

\end{thebibliography}

\end{document}